\newtheorem{theorem}{Theorem}[section]
\newtheorem{corollary}[theorem]{Corollary}
\newtheorem{lemma}[theorem]{Lemma}
\newtheorem{assumption}[theorem]{Assumption}
\newtheorem{proposition}[theorem]{Proposition}
\newtheorem{definition}[theorem]{Definition}
\numberwithin{equation}{section} \theoremstyle{remark}
\newtheorem{remark}[theorem]{Remark}
\begin{document}

\title[Gibbs random fields on unbounded degree graphs]
{Gibbs random fields with unbounded spins on unbounded degree graphs}%
\author{Yuri Kondratiev}%
\address{Fakult\"at f\"ur Mathematik\\ Universit\"at Bielefeld, D 33615, Germany}%
\email{kondrat@math.uni-bielefeld.de}
\author{Yuri Kozitsky}
\address{Instytut Matematyki,\\ Uniwersytet Marii Curie-Sk{\l}odowskiej\\ 20-031 Lublin,
Poland} \email{jkozi@hektor.umcs.lublin.pl}
\author{Tanja Pasurek}
\address{Fakult\"at f\"ur Mathematik\\ Universit\"at Bielefeld, D 33615, Germany}
\email{tpasurek@@math.uni-bielefeld.de}%

\thanks{This work was financially supported by the DFG through SFB 701: ``Spektrale Strukturen
und Topologische Methoden in der Mathematik"  and through the
research project 436 POL 125/0-1. Yuri Kozitsky was also supported
by TODEQ MTKD-CT-2005-030042.}%
\subjclass{60K35; 82B20; 05C07}%
\keywords{Gibbs measure; Gibbs specification; DLR equation; unbounded spin;
unbounded degree graph; generalized Randi\'c index}%

\today
\begin{abstract}
Gibbs random fields corresponding to systems of real-valued spins
(e.g. systems of interacting anharmonic oscillators) indexed by the
vertices of unbounded degree graphs with a certain summability
property are constructed. It is  proven that the set of tempered
Gibbs random fields is non-void and weakly compact, and that they
obey uniform exponential integrability estimates. In the second part
of the paper, a class of graphs is described in which the mentioned
summability is obtained as a consequence of a property, by virtue of
which  vertices of large degree are located at large distances from
each other. The latter is a stronger version of a metric property,
introduced in [Bassalygo, L. A. and Dobrushin, R. L. (1986).
\textrm{Uniqueness of a Gibbs field with a random potential---an
elementary approach. }\textit{ Theory Probab. Appl.} {\bf 31}
572--589].

\end{abstract}
\maketitle
\section{Introduction and paper overview} \label{ISsec}

\subsection{Introduction}

Gibbs random fields on a discrete metric space (e.g. on a graph) can
be viewed as collections of dependent random variables, usually
called {\it spins}, indexed by the elements of this space. Their
joint probability laws are defined by the families of local
conditional distributions constructed by means of interaction
potentials.
 We quote the monographs
\cite{[Ge],[GeH]} as standard sources in the theory of such fields.
Each spin takes values in the corresponding {\it single-spin} space,
say $\mathcal{X}_x$. Most of Gibbs random fields constructed on
general graphs correspond to models with finite single-spin spaces.
Perhaps, the most known example is the Ising model where
$\mathcal{X}_x = \{-1, 1\}$ for all $x$. By the compactness of
$\mathcal{X}_x$, such Gibbs fields exist for arbitrary graphs, see
\cite{[GeH],[Ha],[Ha1],[Ly1],[Ly2],[Weitz]}. Their properties are
closely related to those of random walks or corresponding
percolation models, see e.g. \cite{[Ha],[Ly1],[Ly2]}. The
development of the theory  of Gibbs random fields with unbounded
spins, started in the late seventies in the pioneering works
\cite{[LeP],[COP]},  was strongly motivated  by physical
applications, especially, in Euclidean quantum field theory, see
e.g. \cite{[Simon]}. Since that time, such random fields were
extensively studied, see e.g. the bibliographical notes in
\cite{[P]}. However, the results obtained in all these works were
restricted to the case where the underlying metric space is a simple
cubic lattice $\mathbb{Z}^d$. In \cite{[KP],[P]}, the theory of
Gibbs random fields was extended to unbounded spin systems living on
more general discrete metric spaces, including graphs of bounded
degree. In this context, we mention also the paper \cite{[Hattori]}
where a Gaussian field on a bounded degree graph was studied.

In the present paper, we construct Gibbs random fields with
unbounded spins ($\mathcal{X}_x = \mathbb{R}$ for all $x$) on
unbounded degree graphs of  certain kind and analyze the role played
here by the geometry of the graph. In doing so, we are motivated by
the following reasons:
\begin{itemize}
\item Random fields on Riemannian manifolds, especially those associated
with the corresponding Laplace-Beltrami operators, cf. \cite{[DeA]},
can be approximated by their discrete versions living on appropriate
graphs \cite{[Fujiwara]}. This includes also the case of quantum
fields in curved spacetime, see \cite{[Zegar],[Wald]}.
\item As the degree of the graph can be related to such a property of the corresponding manifold as curvature,
the use of unbounded degree graphs essentially extends the class of
manifolds that can be approximated in the above sense.
\item Another application can be the description
of systems of interacting oscillators located at vertices of an
infinite graph -- the so called oscillating networks, see Section 14
in \cite{[BC]}. We refer also to the survey \cite{[BCV]},  where
other relevant physical models can be found.
\end{itemize}
The results of the paper are: (a) constructing Gibbs random fields;
(b) deriving exponential integrability estimates and support
properties for such fields; (c) presenting a concrete family of
unbounded degree graphs, which can serve as underlying graphs for
our model. In achieving (a) and (b), we used  a modification of the
technique developed in \cite{[KP],[P]}. In constructing the family
of (c) we were inspired by some aspects of \cite{[BD]}. To the best
of our knowledge, the present study is the first attempt in such a
direction. We plan to continue investigating the model introduced
here in forthcoming papers. In particular, we are going to study the
problem of uniqueness of Gibbs random fields, as well as the
ergodicity properties of the corresponding stochastic dynamics.

\subsection{The paper overview}
The model we deal with in this paper is the triple $({\sf G}, W,
V)$, where ${\sf G} = ({\sf V}, {\sf E})$ is a graph,
$W:\mathbb{R}\times \mathbb{R}\rightarrow \mathbb{R}$ and
$V:\mathbb{R}\rightarrow \mathbb{R}$ are continuous functions
(potentials). The properties of the triple $({\sf G}, W, V)$ are
specified below in Assumption \ref{ASS}, see also (\ref{A150}). This
triple determines the heuristic Hamiltonian
\begin{equation}
 \label{hh}
H (\omega) = \sum_{\langle x, y \rangle} W(\omega(x), \omega(y)) +
\sum_{x} V(\omega (x)),
\end{equation}
where the first (resp. second) sum is taken over all edges (resp.
vertices) of the graph. For this model,  Gibbs random fields are
defined as probability measures on the configuration space
$\mathit{\Omega} = \mathbb{R}^{\sf V}$. In contrast to the case of
bounded spins, it is unrealistic to describe  all
 Gibbs measures of an unbounded spin
system without assuming a priori  any of its properties. Thus, among
all Gibbs measures corresponding to (\ref{hh}) we distinguish those
that have a prescribed support property, i.e., such that $\mu
(\mathit{\Omega}^{\rm t})=1$ for an a priori chosen proper subset
$\mathit{\Omega}^{\rm t} \subset \mathit{\Omega}$. These measures
are called {\it tempered}. In Theorem \ref{T2}, we show that the set
of tempered Gibbs measures $\mathcal{G}^{\rm t}$ is non-void and
weakly compact. Here we also show that each $\mu\in\mathcal{G}^{\rm
t}$ obeys important integrability estimates, the same for all such
measures. In Theorems \ref{T22} and \ref{T3}, these results are
extended in the following directions: (a) we allow the potential $W$
to be super-quadratic, see (\ref{A150}); (b) we consider a scale of
sets of tempered Gibbs measures, which clarifies connections between
the graph geometry and the properties of such measures. These our
results are valid for any graph possessing the summability specified
in Assumption \ref{ASS}. To provide a nontrivial example of
unbounded degree graphs with this property, in the second part of
the paper we introduce a new class of such graphs, which we believe
is interesting in its own right. This class is characterized by the
following property, cf. (\ref{Qq0}) and (\ref{I}). For vertices $x$
and $y$, such that their degrees, $n(x)$ and $n(y)$, exceed some
threshold value, the path distance is supposed to obey the
`repulsion' condition
 \begin{equation} \label{rep}
\rho (x,y) \geq \phi\left[ \max\{n(x), n(y) \}\right],
\end{equation} where $\phi$ is a given increasing function. In such
graphs, every vertex $x$ has the property that
\[
\sup_{y: \ \rho(x,y) \leq N} n(y) \leq \phi^{-1} (2N),
\]
whenever $N$
 exceeds some $N_x$,
specific for this $x$. By means of this property, for $\phi(b) =
\upsilon \log b [\log\log b]^{1+\varepsilon}$, $\upsilon,
\varepsilon >0$, we obtain the estimate
\[
\sum_{y: \ \rho(x,y) = N} [n(y)]^{1+\theta} \leq \exp(a N),
\]
which holds for any $\theta>0$ and an appropriate $a>0$, whenever
$N\geq N_x$.  In Theorem \ref{Itm}, we show that the latter estimate
implies the required summability (\ref{ass}).

The rest of the paper is organized as follows. In the first part,
the emphasis is put on the probabilistic stuff, whereas the second
part -- Section \ref{RGsec} -- is devoted to the graph-theoretical
aspects of the problem. In Section \ref{Qsec}, we specify the class
of models by imposing conditions on the graph and on the potentials.
The only essential condition imposed on ${\sf G}$ is the summability
(\ref{ass}). The potentials are supposed to obey quite standard
stability requirements, plus continuity. We note, however, that the
stability condition (\ref{A16}) is a bit stronger than the one with
$q=2$, typical for graphs of bounded degree. In view of this fact,
the Gaussian case is not covered by our theory. Thereafter, we put
forward Theorem \ref{T2}. In Section \ref{PEsec}, we present
Theorems \ref{T22} and \ref{T3}. The proof of the latter theorem
follows from the estimates obtained in Theorem \ref{T22}. The proof
of Theorem \ref{T22}, which is the main technical component of the
first part of the paper, is given in Section \ref{PROsec}.
 It is
preceded by a number of lemmas, in which we elaborate the
corresponding tools. The key element here is Lemma \ref{A2lm} the
proof of which crucially employs the summability (\ref{ass}). In
Section \ref{RGsec}, we introduce and describe the class of graphs
with the property (\ref{rep}), which by Theorem \ref{Itm} can serve
as underlying graphs for our model.

\section{The setup and the basic theorem}

\label{Qsec}

\subsection{The model}
The underlying graph ${\sf G} = ({\sf V}, {\sf E})$ of the model
(\ref{hh}) is supposed to be  undirected and countable. Two adjacent
vertices $x,y\in {\sf V}$ are also called neighbors. In this case,
we write $x\sim y$ and $\langle x,y\rangle \in {\sf E}$. The {\it
degree} of $x\in {\sf V}$, denoted by $n(x)$, is the cardinality of
the neighborhood of $x$, that is, of the set $\{ y \ | \ y\sim x\}$.
We use the shorthand
\begin{equation*}
\sum_{x} = \sum_{x\in {\sf V}} \qquad \ \sup_x = \sup_{x\in {\sf V}}
\qquad \ \sum_{y\sim x} = \sum_{y\in {\sf V} : \ y\sim x}.
\end{equation*}
The graph  is assumed to be {\it locally finite},  which means that
$n(x) \in \mathbb{N}$ for any $x$. At the same time, we assume that
$\sup_{x} n(x) = +\infty$, which is reflected in the title of the
paper. Of course, our results are trivially valid for bounded degree
graphs.

A sequence $\vartheta=\{x_0, x_1, \dots , x_n \}$, such that $x_k
\sim x_{k+1}$ for all $k=0, \dots , n-1$, is called a {\it path}.
Herein, some of the vertices may be repeated. The path connects its
{\it endpoints} $x_0$ and $x_n$; it {\it leaves} the vertices $x_0,
\dots , x_{n-1}$ and {\it enters} $x_1, \dots , x_n$. The number of
left vertices, denoted by $\|\vartheta\|$, is called the {\it
length} of the path. For $x,y\in {\sf V}$, by $\vartheta (x,y)$ we
denote a path, whose endpoints are $x$ and $y$. We assume that ${\sf
G}$ is connected, which means that there exists a path $\vartheta
(x,y)$ for every  $x$ and $y$. The path distance $\rho(x,y)$ is set
to be the length of the shortest $\vartheta(x,y)$. It is a metric on
${\sf G}$ by means of which, for a certain $o\in{\sf V}$ and $\alpha
>0$, we define
\begin{equation}
  \label{A11}
 w_\alpha (x) = \exp[- \alpha \rho(o,x)],
  \quad \ x\in {\sf V}.
\end{equation}
For $\theta >0$, we also set\footnote{In mathematical chemistry, the
sum of terms $[n(x) n(y)]^\theta$ taken over the edges $\langle x,y
\rangle$ of a finite tree is known under the name {\it generalized
Randi\'c} or {\it connectivity}  index, see e.g. \cite{[CG]}.}
\begin{equation}
  \label{Pq}
m_\theta (x) = \sum_{y\sim x} [n(x) n(y)]^\theta, \quad \ x\in {\sf
V}.
\end{equation}
The remaining  properties of the model are summarized in
\begin{assumption} \label{ASS}
The triple $({\sf G}, W, V)$ is subject to the following conditions:
 \vskip.1cm
\begin{tabular}{ll}
(i) \ &the graph ${\sf G}$ is such that, for some positive $\alpha$
and $\theta$,
\end{tabular}
\begin{equation}
  \label{ass}
\mathit{\Theta} (\alpha , \theta) \ \stackrel{\rm def}{=}\  \sum_{x}
m_\theta (x) w_\alpha (x) < \infty;
\end{equation}
\begin{tabular}{ll}
 \ (ii) \ &the function $W$ is continuous, symmetric, and such that
\end{tabular}
\begin{equation}
  \label{A15}
|W(u, v)| \leq [I_W + J_W (u^2 + v^2)]/2,
\end{equation}
\begin{tabular}{ll}
 &for some $I_W, J_W >0$ and all $u, v \in \mathbb{R}$;\\[.2cm]
\ (iii) \ &the function $V$ is continuous and such that, for all
$u\in \mathbb{R}$,
\end{tabular}
\begin{equation}
  \label{A16}
 V(u) \geq a_V |u|^q - c_V,
\end{equation}
\begin{tabular}{ll}
\qquad \ \  &for some $a_V, c_V >0$ and  $q> 2 +2/\theta$, with
$\theta$ being the same\\[.2cm] &as in (i).
\end{tabular}
\end{assumption}

\subsection{The basic result}
Following the standard DLR route, see \cite{[Ge]},
 the Gibbs random fields
for our model are defined as  probability measures on the measurable
space $(\mathit{\Omega},\mathcal{B}(\mathit{\Omega}))$. Here
$\mathit{\Omega} = \mathbb{R}^{\sf V}$ is the configuration space,
 equipped with the product topology and with the corresponding
Borel $\sigma$-field $\mathcal{B}(\mathit{\Omega})$. By
$\mathcal{P}(\mathit{\Omega})$ we denote the space of all
probability measures on
$(\mathit{\Omega},\mathcal{B}(\mathit{\Omega}))$, which is equipped
with the weak topology determined by bounded continuous functions
$f:\mathit{\Omega}\rightarrow \mathbb{R}$. By $C_{\rm
b}(\mathit{\Omega})$ we denote the set of all such functions.

In the sequel, by writing $\mathit{\Lambda} \Subset {\sf V}$ we mean
that $\mathit{\Lambda}$ is a finite and non-void set of vertices. A
property related to such a subset is called {\it local}. As usual,
$\mathit{\Lambda}^c = {\sf V}\setminus\mathit{\Lambda}$ stands for
the complement of  $\mathit{\Lambda}\subset {\sf V}$.

For $\mathit{\Lambda}\Subset {\sf V}$ and $\omega \in
\mathit{\Omega}$, by $\omega_{\mathit{\Lambda}}$ we denote the
restriction of $\omega$ to $\mathit{\Lambda}$, and use the
decomposition $\omega = \omega_{\mathit{\Lambda}} \times
\omega_{\mathit{\Lambda}^c}$. Then for such $\mathit{\Lambda}$ and a
fixed $\xi \in \mathit{\Omega}$, the relative local Hamiltonian is
set to be
\begin{eqnarray} \label{ass1}
H_{\mathit{\Lambda}} (\omega_{\mathit{\Lambda}}|\xi) & = &
\sum_{\langle x , y\rangle : \ x,y\in
\mathit{\Lambda}} W(\omega(x) , \omega(y)) \\[.2cm] & + &\sum_{\langle x , y\rangle : \ x\in
\mathit{\Lambda}, \ y\in \mathit{\Lambda^c}} W(\omega(x) , \xi(y)) +
\sum_{x\in \mathit{\Lambda}}V(\omega (x)). \nonumber
\end{eqnarray}
 Thereby, for
$\mathit{\Lambda}\Subset {\sf V}$, $\xi\in \mathit{\Omega}$, and
$A\in \mathcal{B}(\mathit{\Omega})$, we define
\begin{eqnarray} \label{ass2}
  \pi_{\mathit{\Lambda}} (A|\xi)
  & = & \frac{1}{Z_\mathit{\Lambda}
  (\xi)}\int_{{\mathbb{R}}^{|\mathit{\Lambda}|}} \mathbb{I}_A (\omega_{\mathit{\Lambda}}\times \xi_{\mathit{\Lambda}^c})
\exp\left[ -H_{\mathit{\Lambda}}
(\omega_{\mathit{\Lambda}}|\xi)\right] {\rm d}
\omega_{\mathit{\Lambda}}, 
\end{eqnarray}
where $\mathbb{I}_A$ is the indicator function, ${\rm d}
\omega_\mathit{\Lambda}$ is the corresponding Lebesgue measure on
the Euclidean space $\mathit{\Omega}_\mathit{\Lambda} \stackrel{\rm
def}{=} \mathbb{R}^{|\mathit{\Lambda}|}$, and $Z_\mathit{\Lambda}
(\xi)$ is a normalizing factor. Hence, each $\pi_{\mathit{\Lambda}}
(\cdot |\xi)\in \mathcal{P}(\mathit{\Omega})$. The family
$\{\pi_{\mathit{\Lambda}}\}_{\mathit{\Lambda}\Subset {\sf V}}$ is
called the {\it local Gibbs specification} for the model we
consider. Directly from the definition (\ref{ass2}), one makes sure
that this family is consistent in the following sense:
\begin{equation}
  \label{A19a}
\int_{\mathit{\Omega}} \pi_{\mathit{\Delta}} (A|\omega)
\pi_{\mathit{\Lambda}} ({\rm d}\omega|\xi) = \pi_{\mathit{\Lambda}}
(A|\xi),
\end{equation}
which holds for all $A\in \mathcal{B}(\mathit{\Omega})$, all
$\mathit{\Delta}\subset \mathit{\Lambda}$, and all $
\mathit{\Lambda} \Subset {\sf V}$.
\begin{definition}\label{assdf}
A measure $\mu\in \mathcal{P}(\mathit{\Omega})$ is said to be a
Gibbs random field corresponding to the Hamiltonian (\ref{hh}) if it
solves the following (DLR) equation
\begin{equation}
  \label{dlr}
  \mu(A) = \int_{\mathit{\Omega}} \pi_{\mathit{\Lambda}} (A|\omega) \mu({\rm d}\omega),
\end{equation}
for all $A\in \mathcal{B}(\mathit{\Omega})$ and $\mathit{\Lambda}
\Subset {\sf V}$.
\end{definition}
An equivalent version of (\ref{dlr}) is the following equation
\begin{equation}
  \label{dlr1}
\mu(f) = \int_{\mathit{\Omega}} \pi_{\mathit{\Lambda}} (f|\omega)
\mu({\rm d}\omega),
\end{equation}
which ought to hold for all $f\in C_{\rm b}(\mathit{\Omega})$  and
$\mathit{\Lambda} \Subset {\sf V}$. Here, for such $f$ and $\mu \in
\mathcal{P}(\mathit{\Omega})$, we use the notation
\[
\mu(f) = \int_{\mathit{\Omega}} f(\omega) \mu({\rm d}\omega).
\]
 Let
$\mathcal{G}$ stand for the set of all solutions of (\ref{dlr}). As
is typical for unbounded spin systems, it is far from being obvious
whether $\mathcal{G}$ is non-void. But if it is the case, the
description of properties possessed by all the elements of
$\mathcal{G}$ is rather unrealistic. Thus, one constructs and
studies a subset of $\mathcal{G}$, consisting of the measures
possessing a prescribed (support) property. Such measures are called
{\it tempered}.

For positive $p$ and $\alpha$, we set
\begin{equation}
  \label{A12}
\|\omega\|_{p,\alpha} = \left[ \sum_{x\in {\sf V} }|\omega(x)|^p
w_\alpha (x)\right]^{1/p},
\end{equation}
where the weights $w_\alpha$ are defined in (\ref{ass}).
 Then
\begin{equation*} 
 L^p ({\sf V}, w_\alpha) =\{\omega\in
\mathbb{R}^{\sf V} \ | \ \|\omega\|_{p,\alpha}<\infty\},
\end{equation*}
is a Banach space. For $\theta$ and $q$ being as in (\ref{ass}) and
in (\ref{A16}), respectively,  we fix
\begin{equation}
  \label{p}
p = 2 + 2/\theta  < q.
\end{equation}
For this $p$, the set of {\it tempered} configurations is set to be
\begin{equation}
  \label{FEB}
 \mathit{\Omega}^{\rm t} = L^p ({\sf V}, w_\alpha),
\end{equation}
where $\alpha$ is as in (\ref{ass}).
 Clearly,
$\mathit{\Omega}^{\rm t}\in \mathcal{B}(\mathit{\Omega})$; hence,
one can define
\begin{equation}
 \label{t1}
\mathcal{G}^{\rm t} = \{\mu\in\mathcal{G} \ | \
\mu(\mathit{\Omega}^{\rm t}) =1 \}.
\end{equation}
\begin{theorem}[{\sc Basic}]
  \label{T2}
The set $\mathcal{G}^{\rm t}$ is non-void and weakly compact. For
every $\lambda >0$ and $x\in {\sf V}$, there exists a positive
constant $C(\lambda,x)$,  such that, for  all $\mu \in
\mathcal{G}^{\rm t}$,
\begin{equation}
  \label{A21}
 \int_{\mathit{\Omega}} \exp \left( \lambda |\omega(x)|^{p} \right)\mu({\rm d}\omega) \leq C(\lambda,x).
\end{equation}
Furthermore, for every $\lambda >0$, there exists a positive
constant $C(\lambda)$,  such that, for  all $\mu \in
\mathcal{G}^{\rm t}$,
\begin{equation}
  \label{A21A}
\int_{\mathit{\Omega}} \exp \left( \lambda \|\omega\|_{p,\alpha}^{p}
\right)\mu({\rm d}\omega) \leq C(\lambda).
\end{equation}
Herein, $\alpha$  and $p$ are the same as in (\ref{FEB}).
\end{theorem}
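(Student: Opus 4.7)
The plan is to obtain the tempered Gibbs measures as weak cluster points of the finite-volume specifications $\pi_{\mathit{\Lambda}_n}(\cdot|\xi)$ with a fixed tempered boundary condition $\xi \in \mathit{\Omega}^{\rm t}$ and an exhausting sequence $\mathit{\Lambda}_n \uparrow {\sf V}$, and to derive the bounds (\ref{A21}) and (\ref{A21A}) as a priori estimates that are uniform in $n$ and survive in the limit.

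The core step is to prove the uniform a priori estimate
\[
\sup_n \int_{\mathit{\Omega}} \exp\bigl(\lambda \|\omega\|_{p,\alpha}^p\bigr)\, \pi_{\mathit{\Lambda}_n}(d\omega|\xi) \leq C(\lambda,\xi),
\]
together with a companion single-site version bounding $\int \exp(\lambda |\omega(x)|^p)\, \pi_{\mathit{\Lambda}_n}(d\omega|\xi)$, where the constants depend on $\xi$ only through $\|\xi\|_{p,\alpha}$. The heuristic is that the super-quadratic lower bound (\ref{A16}), with $q > p = 2 + 2/\theta$, dominates the quadratic growth of $W$ in (\ref{A15}): by Young's inequality $n(x)\omega(x)^2 \leq \varepsilon |\omega(x)|^q + c_\varepsilon\, n(x)^{q/(q-2)}$, the interaction energy at $x$ is absorbed into $V(\omega(x))$ up to a residue that, summed against the weight $w_\alpha(x)$, is controlled by a quantity of the form $\sum_x m_\theta(x) w_\alpha(x)$, which is finite by (\ref{ass}) precisely when $q/(q-2) \leq 1+\theta$, i.e.\ $q \geq 2 + 2/\theta$. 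The boundary contribution of $\xi$ is absorbed by the same mechanism applied along the edges between $\mathit{\Lambda}_n$ and $\mathit{\Lambda}_n^c$.

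Given the uniform estimate, the rest is standard. Chebyshev converts it into uniform tightness of $\{\pi_{\mathit{\Lambda}_n}(\cdot|\xi)\}_n$ on $(\mathit{\Omega},\mathcal{B}(\mathit{\Omega}))$, so Prokhorov supplies a weak cluster point $\mu$. For each $\mathit{\Lambda} \Subset {\sf V}$ and $f \in C_{\rm b}(\mathit{\Omega})$, the consistency (\ref{A19a}), together with continuity of $\omega \mapsto \pi_{\mathit{\Lambda}}(f|\omega)$ (which follows from (\ref{ass2}) by dominated convergence), allows passing to the limit in $\pi_{\mathit{\Lambda}_{n_k}}(f|\xi) = \int \pi_{\mathit{\Lambda}}(f|\omega)\, \pi_{\mathit{\Lambda}_{n_k}}(d\omega|\xi)$, yielding (\ref{dlr1}) for $\mu$. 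The bounds (\ref{A21}) and (\ref{A21A}) pass to $\mu$ by applying Fatou to the bounded continuous truncations $\exp(\lambda \min\{\|\omega\|_{p,\alpha}^p, R\})$ and letting $R \to \infty$; in particular $\mu(\mathit{\Omega}^{\rm t}) = 1$ by Markov, so $\mu \in \mathcal{G}^{\rm t}$. For weak compactness, the same a priori estimate applied to any $\mu \in \mathcal{G}^{\rm t}$ via the DLR equation (\ref{dlr}) and Fubini yields a bound uniform in $\mu$, giving tightness of $\mathcal{G}^{\rm t}$; closedness under weak convergence follows by repeating the limiting argument for both (\ref{dlr1}) and (\ref{A21A}).

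The hard part is the uniform exponential bound in the core step. Naive one-site bounds on $\pi_{\mathit{\Lambda}}(\cdot|\xi)$ yield a factor $\exp(c\,n(x)\omega(x)^2)$ at each site $x$, which on bounded-degree graphs is harmlessly absorbed by any super-quadratic $V$ but on unbounded-degree graphs forces the growth exponent $q$ to be calibrated to the degree summability encoded in $m_\theta$. Establishing the bound requires a careful recursive estimate across the graph — this is the role of the lemmas in Section \ref{PROsec}, centred on the key lemma that uses (\ref{ass}) — in which the interplay between (\ref{A15}), (\ref{A16}) and (\ref{ass}) is made quantitative. Once that recursion closes, all of Theorem \ref{T2} follows by the Prokhorov/DLR machinery sketched above.
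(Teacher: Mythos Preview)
Your proposal is correct and follows essentially the same route as the paper: uniform exponential bounds for the local specifications (the paper's Lemma \ref{A2lm}, built from the single-site estimate Lemma \ref{1lm} iterated via consistency (\ref{A19a}) and closed by summing against $w_\alpha$), then Prokhorov tightness, the Feller property to pass the DLR equation to the limit, truncation plus monotone convergence for (\ref{A21A}), and finally DLR plus Fatou (not Fubini) to make the bound uniform over $\mathcal{G}^{\rm t}$. Your Young-inequality heuristic $n(x)\omega(x)^2 \leq \varepsilon|\omega(x)|^q + c_\varepsilon n(x)^{q/(q-2)}$ gets the exponent balance right, but note that the paper's actual mechanism applies Young edge-by-edge with $\varkappa = \beta/(n(x)n(y))$, producing the recursion (\ref{A31}) in the log-moments $M_x$ rather than a direct site-wise absorption.
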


\section{Extensions} \label{PEsec}

\subsection{More on temperedness} In this subsection,
Theorem \ref{T2} is extended in the following directions: (a) we
allow a super-quadratic growth of the potential $W$, cf.
(\ref{A15}); (b) we construct a scale of sets of tempered Gibbs
fields, the elements of which obey integrability estimates, stronger
than (\ref{A21}) and (\ref{A21A}).

In what follows, instead of (\ref{A15}) we assume
\begin{equation}
  \label{A150}
|W(u, v)| \leq [I_W + J_W (|u|^r + |v|^r)]/2,
\end{equation}
for some $r>0$. The potential $V$ is assumed to obey (\ref{A16})
with $q> r + r/\theta$, where $\theta$ is  as in (\ref{ass}). The
graph ${\sf G}$ is supposed to be the same as in Assumption
\ref{ASS}. The scale of tempered Gibbs fields which we are going to
construct will be indexed by $\alpha$ and $p$. First, we set
\begin{equation} \label{A12B}
\underline{\alpha} = \inf\{ \alpha \ | \ \mathit{\Theta}(\alpha,
\theta) < \infty\},
\end{equation}
and let $\overline{\alpha}> \underline{\alpha}$ be such that
(\ref{ass}) holds for all $\alpha \in (\underline{\alpha},
\overline{\alpha}]$. Next, we define
\begin{equation}
  \label{A151}
 p_0 {=}  r + r/\theta.
\end{equation}
For $\alpha', \alpha \in (\underline{\alpha}, \overline{\alpha}]$
and $p',p\in [p_0, q)$, by (\ref{A12}) we have
\begin{equation} \label{B26}
L^{p'}({\sf V}, w_{\alpha'}) \hookrightarrow L^{p}({\sf V},
w_{\alpha}), \quad \ \
 {\rm whenever} \ \ \ \alpha ' \leq \alpha \ \ \ {\rm and} \ \ p'\geq p.
\end{equation}
Notably, the above embedding is compact. Then, for $ \alpha \in
(\underline{\alpha}, \overline{\alpha}]$ and $p\in [p_0, q)$, we
set, cf. (\ref{t1}),
\begin{equation}
  \label{A152}
\mathcal{G}_{p,\alpha} = \{\mu\in\mathcal{G} \ | \ \mu\left[
L^{p}({\sf V}, w_{\alpha})\right] =1 \}.
\end{equation}
Clearly
\begin{equation}
  \label{B25}
\mathcal{G}_{p', \alpha'} \subset \mathcal{G}_{p,\alpha}, \quad \ \
 {\rm whenever} \ \ \ \alpha ' \leq \alpha \ \ \ {\rm and} \ \ p'\geq p.
\end{equation}
The following statement is an extended version of Theorem \ref{T2}.
\begin{theorem}[{\sc Extended}]
  \label{T22}
For every $ \alpha \in (\underline{\alpha}, \overline{\alpha}]$ and
$p\in [p_0,q)$, the set $\mathcal{G}_{p,\alpha}$ is non-void and
weakly compact. For every $\lambda >0$ and $x\in {\sf V}$, there
exists a positive constant $C( p, \alpha;\lambda,x)$,  such that,
for  all $\mu \in \mathcal{G}_{p, \alpha}$,
\begin{equation}
  \label{A210}
 \int_{\mathit{\Omega}} \exp \left( \lambda |\omega(x)|^{p} \right)\mu({\rm d}\omega) \leq C(p, \alpha;\lambda,x).
\end{equation}
Furthermore, for every $\lambda >0$, there exists a positive
constant $C(p,\alpha;\lambda)$,  such that, for  all $\mu \in
\mathcal{G}_{p,\alpha}$,
\begin{equation}
  \label{A210A}
\int_{\mathit{\Omega}} \exp \left( \lambda \|\omega\|_{p,\alpha}^{p}
\right)\mu({\rm d}\omega) \leq C(p,\alpha;\lambda).
\end{equation}
\end{theorem}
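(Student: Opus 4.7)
The plan is to construct elements of $\mathcal{G}_{p,\alpha}$ as weak accumulation points of the finite-volume specifications $\pi_{\mathit{\Lambda}_n}(\cdot|\xi)$ along an exhausting sequence $\mathit{\Lambda}_n \uparrow {\sf V}$ with a fixed tempered boundary condition $\xi \in L^{p}({\sf V}, w_\alpha)$, and then to transfer the uniform integrability bounds to every $\mu \in \mathcal{G}_{p,\alpha}$ through the DLR equation (\ref{dlr1}). The backbone of the argument is a single $\mathit{\Lambda}$-uniform a priori estimate on the exponential moment $\pi_{\mathit{\Lambda}}(\exp(\lambda \|\omega\|_{p,\alpha}^p)|\xi)$. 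Once this is in hand, (\ref{A210A}) and (\ref{A210}) follow by integrating against $\mu$ in (\ref{dlr1}); tightness of $\mathcal{G}_{p,\alpha}$ comes from the compact embedding (\ref{B26}) applied to $(p',\alpha')$ with $p<p'<q$ and $\underline{\alpha}<\alpha'<\alpha$ together with the uniform bound; the support property $\mu(\mathit{\Omega}^{\rm t})=1$ is read off from Fatou; and closedness of $\mathcal{G}_{p,\alpha}$ under weak limits follows from the continuity of $\omega\mapsto \pi_{\mathit{\Lambda}}(f|\omega)$ in the product topology for $f\in C_{\rm b}(\mathit{\Omega})$.

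To obtain the core estimate I would first derive a pointwise bound at each $x\in\mathit{\Lambda}$. The stability (\ref{A16}) contributes $a_V|\omega(x)|^q - c_V$, while (\ref{A150}) gives $W(u,v) \geq -\tfrac12 J_W(I_W + |u|^r + |v|^r)$, so that the edge terms around $x$ in (\ref{ass1}) cost at most $\tfrac{J_W n(x)}{2}|\omega(x)|^r$ plus a purely boundary piece. Young's inequality with exponents $q/r$ and $q/(q-r)$ absorbs $n(x)|\omega(x)|^r$ into $\varepsilon|\omega(x)|^q + C_\varepsilon n(x)^{q/(q-r)}$; since $q>p_0 = r+r/\theta$ forces $q/(q-r) \leq 1+\theta$ and $m_\theta(x) \geq n(x)^{1+\theta}$, the degree-dependent overhead is $w_\alpha$-summable by exactly (\ref{ass}). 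Using in addition the interpolation $|\omega(x)|^p \leq \varepsilon|\omega(x)|^q + C_\varepsilon$ (valid because $p<q$), this yields an exponential moment bound on $|\omega(x)|^p$ whose logarithm is controlled by $m_\theta(x)$ plus a local boundary term in $\xi$.

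Globalizing to $\|\omega\|_{p,\alpha}^p$ requires a Jensen-type step to push the exponential through the weighted sum $\sum_x w_\alpha(x)(\cdot)$, after which the $\xi$-contribution from boundary edges can be redistributed via $w_\alpha(x) \leq e^\alpha w_\alpha(y)$ for $y\sim x$ and a H\"older split against $m_\theta$, and ultimately bounded by a constant multiple of $\|\xi\|_{p,\alpha}^p$ through (\ref{ass}). The main obstacle I anticipate is precisely making this self-referential bound absorbable uniformly in $\mathit{\Lambda}$ and for $\lambda>0$ in some small interval, producing a closed estimate $\pi_{\mathit{\Lambda}}(\exp(\lambda \|\omega\|_{p,\alpha}^p)|\xi) \leq C_1 \exp(C_2 \|\xi\|_{p,\alpha}^p)$: this is the analogue of the key Lemma \ref{A2lm} flagged in the introduction, and it is the single step where the full strength of (\ref{ass}) together with the strict inequality $q>p_0$ are used. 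The remaining steps --- Prokhorov tightness, passage to the DLR limit, Fatou transfer of the exponential moments --- are then routine.
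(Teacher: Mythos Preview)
Your overall architecture matches the paper's: a $\mathit{\Lambda}$-uniform exponential moment bound for $\pi_{\mathit{\Lambda}}(\cdot|\xi)$ (this is Lemma~\ref{A2lm}), then Prokhorov tightness, the Feller property (Lemma~\ref{asslm}), passage to the DLR limit (Corollary~\ref{asscon}), and Fatou transfer to arbitrary $\mu\in\mathcal{G}_{p,\alpha}$. Two specific steps, however, are not handled by what you wrote.

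First, your single-site Young step absorbs $n(x)|\omega(x)|^r$ into $\varepsilon|\omega(x)|^q$, but this leaves the neighbour contribution $\tfrac{J_W}{2}\sum_{y\sim x}|\xi(y)|^r$ with no degree damping. After consistency (\ref{A19a}) this becomes $\sum_{y\sim x,\,y\in\mathit{\Lambda}}|\omega(y)|^r$, and a H\"older split produces recursion coefficients of order $1/n(x)$ on each $M_y$; when you then sum against $w_\alpha$ and swap the order of summation, an uncontrolled factor of $n(y)$ survives and the recursion does not close. The paper's device is to apply Young directly to $W(u,v)$ with the \emph{edge}-dependent parameter $\varkappa=\beta/[n(x)n(y)]$ (see (\ref{A24})--(\ref{A25})): this yields recursion coefficients $2\beta/[\lambda\, n(x)n(y)]$ in (\ref{A31}), whose weighted row-sums are at most $2\beta e^{\alpha}/\lambda<1$ by (\ref{A47}) and (\ref{A46}), while the resulting overhead $\sum_{y\sim x}[n(x)n(y)]^{r/(p-r)}\leq m_\theta(x)$ is precisely what (\ref{ass}) controls after summing against $w_\alpha$. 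Second, a closed estimate of the form $\pi_{\mathit{\Lambda}}(e^{\lambda\|\omega\|_{p,\alpha}^p}|\xi)\leq C_1 e^{C_2\|\xi\|_{p,\alpha}^p}$ does \emph{not} yield (\ref{A210A}) by integrating against $\mu$: you obtain only the circular bound $\mu(e^{\lambda\|\omega\|^p})\leq C_1\,\mu(e^{C_2\|\omega\|^p})$, and no a~priori finiteness is available to bootstrap. What the paper actually shows is that the $\xi$-dependent piece $\mathit{\Upsilon}_3^\alpha(\mathit{\Lambda})$ in (\ref{A45}) involves $\xi$ only on $\mathit{\Lambda}^c$ and hence vanishes as $\mathit{\Lambda}\to{\sf V}$ for tempered $\xi$; this gives the $\xi$-free limsup bound (\ref{A520}), from which (\ref{A210A}) follows uniformly over $\mathcal{G}_{p,\alpha}$ by DLR and Fatou. (A minor aside: for the weak compactness in $\mathcal{P}(\mathit{\Omega})$ asserted in Theorem~\ref{T22}, the balls $B_{p,\alpha}(R)$ are already compact in the product topology, so the embedding (\ref{B26}) is not needed here --- it enters only in Theorem~\ref{T3}.)
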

Let us make some comments.
\begin{itemize}
\item
For our graphs, one cannot expect that the constants $C(p,\alpha;
\lambda,x)$ in (\ref{A210}) are bounded uniformly in  $x$. This
could be the case if the quantities  $\mathit{\Theta}(\alpha,
\theta)$ were bounded uniformly with respect to the  choice of the
root $o$.
\item
Both estimates (\ref{A210}) and (\ref{A210A}) hold also for $p = q$
but not for all $\lambda$, which should be small enough in this
case.
\item
The interval $[p_0,q)$ is non-void if $q> r + r/\theta$, i.e., if
the stabilizing effect of the potential $V$ is stronger than the
destabilizing  effects of the interaction and of the underlying
graph, caused by its degree property. If the graph is of bounded
degree $\bar{n} = \sup_{x} n(x)$, the condition (\ref{ass}) is
satisfied for any $\theta>0$ and $\alpha > \log \bar{n}$. In this
case, one can take $\theta$ arbitrarily big and get $q>r$ (or $q\geq
r$ for small $\lambda$), which is typical for such situations.
\item  According to (\ref{A12}) and (\ref{B25}),
 the stronger estimates we want to get,
the smaller class of tempered Gibbs random  fields we obtain.
\item In view  of the specific features of the graph geometry,
such as the degree unboundedness and the lack of transitivity, the
two basic statistical-mechanical tools -- Ruelle's superstability
method and Dobrushin's existence and uniqueness criteria -- are not
applicable to our model.
\end{itemize}

The proof of Theorem \ref{T22} will be done in Section \ref{PROsec}.
Theorem \ref{T2} is obtained therefrom as a particular case of
$\alpha = \overline{\alpha}$, $p=p_0$, and $r=2$.

\subsection{More on weak compactness}

Taking into account (\ref{B26}), we define
\begin{equation}
  \label{B27}
\widetilde{\mathit{\Omega}}^{\rm t} = \bigcap_{p\in [p_0,q)}
\bigcap_{\alpha \in (\underline{\alpha}, \overline{\alpha}] }  L^p
({\sf V}, w_\alpha).
\end{equation}
This set  can be endowed with the projective limit topology and
thereby turned into a Fr\'echet space. By standard arguments, its
Borel $\sigma$-field $\mathcal{B}(\widetilde{\mathit{\Omega}}^{\rm
t})$ has the property
\begin{equation} \label{B29}
\mathcal{B}(\widetilde{\mathit{\Omega}}^{\rm t}) = \{ A \cap
\widetilde{\mathit{\Omega}}^{\rm t} \ | \ A\in
\mathcal{B}(\mathit{\Omega})\},
\end{equation}
in view of which, we can define, cf. (\ref{t1}),
\begin{equation}
  \label{B28}
\widetilde{\mathcal{G}}^{\rm t} = \{ \mu \in \mathcal{G} \ | \ \mu
\left(\widetilde{\mathit{\Omega}}^{\rm t}\right)=1\}.
\end{equation}
The elements of the latter set have the smallest support we have
managed to establish. In view of (\ref{B29}), they can be redefined
as probability measures on $(\widetilde{\mathit{\Omega}}^{\rm t},
\mathcal{B}(\widetilde{\mathit{\Omega}}^{\rm t}))$. Let
$\widetilde{\mathcal{W}}^{\rm t}$ be the weak topology on the set of
all probability measures
$\mathcal{P}(\widetilde{\mathit{\Omega}}^{\rm t})$. Clearly,
$\widetilde{\mathcal{W}}^{\rm t}$ is stronger than the topology
mentioned in Theorem \ref{T22}.
\begin{theorem}
 \label{T3}
The set $\widetilde{\mathcal{G}}^{\rm t}$ is non-void and
$\widetilde{\mathcal{W}}^{\rm t}$-compact.
\end{theorem}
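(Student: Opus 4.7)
The plan is to realise $\widetilde{\mathcal{G}}^{\rm t}$ as a countable intersection of the sets $\mathcal{G}_{p_n,\alpha_n}$ supplied by Theorem~\ref{T22}, and then to combine a finite-intersection argument (for non-emptiness) with a Prokhorov-type tightness argument based on (\ref{A210A}) (for $\widetilde{\mathcal{W}}^{\rm t}$-compactness).

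First I would fix two monotone cofinal sequences $\alpha_n\searrow\underline{\alpha}$ in $(\underline{\alpha},\overline{\alpha}]$ and $p_n\nearrow q$ in $[p_0,q)$. By (\ref{B26}) and the definition (\ref{B27}),
\begin{equation*}
\widetilde{\mathit{\Omega}}^{\rm t} \;=\; \bigcap_{n\in\mathbb{N}} L^{p_n}({\sf V},w_{\alpha_n}),
\end{equation*}
so by $\sigma$-additivity
\begin{equation*}
\widetilde{\mathcal{G}}^{\rm t} \;=\; \bigcap_{n\in\mathbb{N}} \mathcal{G}_{p_n,\alpha_n}.
\end{equation*}
Theorem~\ref{T22} tells us that each $\mathcal{G}_{p_n,\alpha_n}$ is a non-empty weakly compact (hence closed) subset of $\mathcal{P}(\mathit{\Omega})$, and (\ref{B25}) says that these sets form a decreasing sequence. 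Since $\mathcal{P}(\mathit{\Omega})$ is Hausdorff, a nested family of non-empty compact sets has non-empty intersection; hence $\widetilde{\mathcal{G}}^{\rm t}\neq\emptyset$.

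For $\widetilde{\mathcal{W}}^{\rm t}$-compactness I would apply Prokhorov's theorem in the Polish space $\widetilde{\mathit{\Omega}}^{\rm t}$ (Polish because it is the projective limit of separable Banach spaces under a countable system of seminorms). Given $\varepsilon>0$, apply Chebyshev's inequality to (\ref{A210A}) with $\lambda=1$ and choose $R_n$ so large that
\begin{equation*}
\sum_{n\in\mathbb{N}} C(p_n,\alpha_n;1)\exp(-R_n^{p_n}) < \varepsilon.
\end{equation*}
Then the closed set
\begin{equation*}
K_\varepsilon \;=\; \bigcap_{n\in\mathbb{N}} \bigl\{\omega\in\widetilde{\mathit{\Omega}}^{\rm t} : \|\omega\|_{p_n,\alpha_n}\le R_n\bigr\}
\end{equation*}
satisfies $\mu(K_\varepsilon)\ge 1-\varepsilon$ for every $\mu\in\widetilde{\mathcal{G}}^{\rm t}$. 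To see that $K_\varepsilon$ is compact in the Fr\'echet topology, the compact embedding noted after (\ref{B26}) is essential: any sequence in $K_\varepsilon$ is bounded in $L^{p_{k+1}}({\sf V},w_{\alpha_{k+1}})$ and therefore, by compactness of the embedding into $L^{p_k}({\sf V},w_{\alpha_k})$, admits a subsequence convergent in the $k$-th seminorm; a diagonal extraction then produces a subsequence convergent in every seminorm defining the topology of $\widetilde{\mathit{\Omega}}^{\rm t}$, and metrisability promotes sequential compactness to compactness.

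Finally I would verify closedness of $\widetilde{\mathcal{G}}^{\rm t}$ in $\widetilde{\mathcal{W}}^{\rm t}$. Since the inclusion $\widetilde{\mathit{\Omega}}^{\rm t}\hookrightarrow\mathit{\Omega}$ is continuous, restriction embeds $C_{\rm b}(\mathit{\Omega})$ into $C_{\rm b}(\widetilde{\mathit{\Omega}}^{\rm t})$, so any $\widetilde{\mathcal{W}}^{\rm t}$-limit converges weakly in $\mathcal{P}(\mathit{\Omega})$ under the canonical extension made unambiguous by (\ref{B29}), and hence remains in every weakly closed $\mathcal{G}_{p_n,\alpha_n}$, i.e.\ in $\widetilde{\mathcal{G}}^{\rm t}$. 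The principal obstacle in the whole argument is the Fr\'echet compactness of $K_\varepsilon$, which rests on the compact embedding property of the scale $L^p({\sf V},w_\alpha)$; everything else is a routine assembly of Theorem~\ref{T22}, Chebyshev's inequality and Prokhorov's theorem.
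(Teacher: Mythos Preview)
Your proof is correct and follows essentially the same route as the paper: non-emptiness via the finite-intersection property applied to the nested weakly compact sets $\mathcal{G}_{p_n,\alpha_n}$ from Theorem~\ref{T22}, and $\widetilde{\mathcal{W}}^{\rm t}$-compactness via Prokhorov's theorem using the uniform bounds (\ref{A210A}) together with the compact embeddings (\ref{B26}) to produce a common compact set in $\widetilde{\mathit{\Omega}}^{\rm t}$ of mass $\geq 1-\varepsilon$. Your write-up is in fact somewhat more explicit than the paper's, in particular in spelling out the diagonal argument for compactness of $K_\varepsilon$ and the closedness of $\widetilde{\mathcal{G}}^{\rm t}$ in $\widetilde{\mathcal{W}}^{\rm t}$.
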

\begin{proof}
Let $\widetilde{\mathcal{G}}$ be the intersection of all
$\mathcal{G}_{p,\alpha}$, with $\alpha \in (\underline{\alpha},
\overline{\alpha}]$ and $\ p\in [p_0,q)$, see (\ref{B27}). By
compactness established in Theorem \ref{T22} the set
$\widetilde{\mathcal{G}}$ is non-void. Obviously, all its elements
belong to $\widetilde{\mathcal{G}}^{\rm t}$ and hence these two sets
coincide. Furthermore, the elements of $\widetilde{\mathcal{G}}$
obey the estimates (\ref{A210}), (\ref{A210A}) with all $\alpha \in
(\underline{\alpha}, \overline{\alpha}]$ and $\ p\in [p_0,q)$.

Let us now prove the stated $\widetilde{\mathcal{W}}^{\rm
t}$-compactness. To this end we consider the balls
\begin{equation} \label{ball}
B_{p,\alpha} (R) = \{ \omega \ | \ \|\omega\|_{p, \alpha} \leq R\},
\qquad R>0,
\end{equation}
and fix two monotone sequences $\alpha_k \downarrow
\underline{\alpha}$ and $p_k \uparrow q$, as $k\rightarrow +\infty$.
In view of (\ref{A210A}), for any $k\in \mathbb{N}$ and
$\epsilon>0$, one can pick $R_{k,\epsilon}>0$
 such that,
\[
\mu  \left[ B_{p_k,\alpha_k} (R_{k,\epsilon}) \right] \geq 1 -
\epsilon/2^k,
\]
uniformly for all $\mu \in \mathcal{G}_{p_k,\alpha_k}$, and hence
for all $\mu \in \widetilde{\mathcal{G}}^{\rm t}$. By the
compactness of the embedding (\ref{B26}), the set
\[
B =  \bigcap_{k\in \mathbb{N}}  B_{p_k,\alpha_k} (R_{k,\epsilon})
\]
is compact in $\widetilde{\mathit{\Omega}}^{\rm t}$, and is such
that $\mu(B) \geq 1 - \epsilon$ for all $\mu \in
\widetilde{\mathcal{G}}^{\rm t}$. Thereafter, the
$\widetilde{\mathcal{W}}^{\rm t}$-compactness of
$\widetilde{\mathcal{G}}^{\rm t}$ follows by the renowned Prokhorov
theorem.
\end{proof}

\subsection{Gibbs states of systems of anharmonic oscillators}
The Gibbs random fields constructed above can serve as equilibrium
thermodynamic states of systems of one-dimensional anharmonic
oscillators, indexed by the vertices of ${\sf G}$ and interacting
with each other along the edges by the potential $W$ (oscillating
networks). Obviously, Theorem \ref{T2} holds true if one replaces
the single-spin space $\mathbb{R}$ with $\mathbb{R}^\nu$, $\nu\in
\mathbb{N}$, which would correspond to multi-dimensional
oscillators. Furthermore, by means of the technique developed in
\cite{[Mono],[KP],[P]} this theorem can also be extended to the case
where the single-spin spaces are copies of $C_\beta$ -- the Banach
space of continuous functions (temperature loops) $\omega: [0,\beta]
\rightarrow \mathbb{R}^\nu$, $\beta >0$, such that $\omega (0) =
\omega (\beta)$. In this case, the Gibbs random fields correspond to
the so called {\it Euclidean} thermodynamic Gibbs states of a system
of interacting $\nu$-dimensional quantum anharmonic oscillators, for
which  $\beta^{-1}$ is temperature.

\section{Properties of the local Gibbs specification}

\label{PROsec} In this section, we prove that the estimate
(\ref{A210A}) holds also for all
$\pi_{\mathit{\Lambda}}(\cdot|\xi)$. This will imply all the
properties of the family
$\{\pi_{\mathit{\Lambda}}\}_{\mathit{\Lambda}\Subset {\sf V}}$ which
we need to prove Theorem \ref{T22}. We begin by deriving a basic
estimate, which allows us to control the $\xi$-dependence of moments
of $\pi_{\mathit{\Lambda}}$ with one-point  $\mathit{\Lambda} =
\{x\}$. Its extension to arbitrary $\mathit{\Lambda}$'s will be
obtained by means of the consistency property (\ref{A19a}).

\subsection{Moment estimates}
 From (\ref{A150}),  by an easy calculation we get
\begin{equation}
  \label{A24}
|W(u, v)| \leq \varkappa \left( |u|^p + |v|^p \right) + I_W/2 +
2(p-r) \left(\frac{J_W}{2p}\right)^{p/(p-r)}\left(\frac{r}{
\varkappa}\right)^{r /(p-r)},
\end{equation}
which holds for all $u,v \in \mathbb{R}$, and $\varkappa >0$, $p>r$.
We will use this estimate with $\varkappa = \beta / n(x) n(y)$,
$x,y\in {\sf V}$, $\beta >0$. For such $\beta$ and $p\in [p_0,q)$,
we set
\begin{eqnarray}
  \label{A25}
\mathit{\Gamma}_{xy}(\beta,p) & = &  \gamma(\beta,p) [n(x) n(y)]^{r
/(p-r)} ,\\[.2cm] \gamma(\beta,p)  &{=} &  I_W + 4(p-r)
\left(\frac{J_W}{2p}\right)^{p/(p-r)}
\left(\frac{r}{\beta}\right)^{r/(p-r)} , \nonumber
\end{eqnarray}
and
\begin{eqnarray}
  \label{A26}
 C({\beta,\lambda,p}) & = & c_V  +\log\left\{\int_{\mathbb{R}}
 \exp\left( ( \lambda +\beta) |u|^p - a_V |u|^q \right) {\rm d}u\right\}\\[.2cm] & - &
\log\left\{\int_{\mathbb{R}}
 \exp\left( - \beta |u|^p -V (u) \right) {\rm d}u\right\}, \nonumber
\end{eqnarray}
where $\lambda>0$ and $a_V$, $c_V$, and $q$ are the same as in
(\ref{A16}) and (\ref{p}). Note that the integral in the latter line
is positive. In the lemma below, $\pi_x$ and $Z_x$ stand for the
corresponding objects defined in (\ref{ass2}) with $\mathit{\Lambda}
= \{x\}$.
\begin{lemma}
  \label{1lm}
For every $\lambda>0$,  $p\in [p_0,q)$, $x\in {\sf V}$,  and $\xi\in
\mathit{\Omega}$, the following estimate holds
\begin{eqnarray}
  \label{A27}
& & \int_{\mathit{\Omega}} \exp\left( \lambda |\omega(x)|^p
\right)\pi_x
({\rm d}\omega  |\xi) \\[.2cm] & &
\qquad \quad \leq \exp\left(C(\beta,\lambda ,p)+ \sum_{y\sim x}
 \frac{2\beta|\xi(y)|^p}{n(x) n(y)}  + \sum_{y\sim x} \mathit{\Gamma}_{xy}(\beta ,p) \right). \nonumber
\end{eqnarray}
\end{lemma}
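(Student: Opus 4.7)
The plan is to bound the numerator $N_x(\xi)$ and denominator $Z_x(\xi)$ of the one-point Gibbs kernel separately, using (\ref{A24}) with the edge-dependent choice $\varkappa = \beta / [n(x) n(y)]$. This choice is designed so that when we sum over neighbors $y \sim x$, the $|u|^p$-contribution is at most $\beta |u|^p$, because $\sum_{y\sim x} 1/[n(x) n(y)] \leq \sum_{y\sim x} 1/n(x) = 1$ using $n(y) \geq 1$. The price paid for taking $\varkappa$ so small is the appearance of $[n(x) n(y)]^{r/(p-r)}$ in the constant term of (\ref{A24}); together with the $I_W/2$ summand (which can be absorbed via $[n(x) n(y)]^{r/(p-r)} \geq 1$) this produces exactly $\mathit{\Gamma}_{xy}(\beta, p)/2$ as in (\ref{A25}). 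We thus obtain, for each $y\sim x$,
\[
|W(u, \xi(y))| \leq \frac{\beta}{n(x) n(y)} \bigl(|u|^p + |\xi(y)|^p\bigr) + \frac{1}{2} \mathit{\Gamma}_{xy}(\beta, p).
\]

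To estimate the numerator $N_x(\xi) = \int_{\mathbb{R}} \exp\bigl[\lambda |u|^p - \sum_{y\sim x} W(u, \xi(y)) - V(u)\bigr] du$, I use $-W \leq |W|$ together with (\ref{A16}) in the form $-V(u) \leq c_V - a_V |u|^q$. Summing the pointwise bound above over $y\sim x$ yields
\[
N_x(\xi) \leq \exp\Bigl[c_V + \sum_{y \sim x} \frac{\beta |\xi(y)|^p}{n(x) n(y)} + \frac{1}{2} \sum_{y \sim x} \mathit{\Gamma}_{xy}(\beta, p)\Bigr] \int_{\mathbb{R}} e^{(\lambda+\beta)|u|^p - a_V |u|^q}\, du,
\]
where the inner integral is finite since $q > p$ by (\ref{p}) and (\ref{A151}). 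Symmetrically, applying $-W \geq -|W|$ inside $Z_x(\xi) = \int_{\mathbb{R}} \exp[-\sum_{y\sim x} W(u, \xi(y)) - V(u)]\, du$, I obtain
\[
Z_x(\xi) \geq \exp\Bigl[- \sum_{y \sim x} \frac{\beta |\xi(y)|^p}{n(x) n(y)} - \frac{1}{2} \sum_{y \sim x} \mathit{\Gamma}_{xy}(\beta, p)\Bigr] \int_{\mathbb{R}} e^{-\beta|u|^p - V(u)}\, du,
\]
with the latter integral strictly positive.

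Taking the ratio $N_x(\xi)/Z_x(\xi)$, the $\xi$-dependent exponents add to give exactly $\sum_{y\sim x} 2\beta |\xi(y)|^p/[n(x) n(y)]$, the $\mathit{\Gamma}_{xy}/2$ terms sum to $\sum_{y\sim x} \mathit{\Gamma}_{xy}(\beta, p)$, and the quotient of the two one-dimensional integrals combined with $c_V$ is exactly $C(\beta, \lambda, p)$ as defined in (\ref{A26}). This proves (\ref{A27}).

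The main point to get right is the first step: choosing $\varkappa$ that depends on the edge rather than a uniform constant. A uniform $\varkappa$ would contribute $n(x) \varkappa |u|^p$ after summing over neighbors, which cannot be controlled when $\sup_x n(x) = +\infty$. The edge-dependent scaling is precisely the mechanism that trades the degree-unboundedness of ${\sf G}$ against the super-stability $q > p$ of the self-potential $V$, and it is what makes the unbounded-degree setting tractable.
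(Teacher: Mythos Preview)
Your proof is correct and follows essentially the same approach as the paper: both arguments apply (\ref{A24}) with the edge-dependent choice $\varkappa = \beta/[n(x)n(y)]$, bound the numerator and the normalizing constant $Z_x(\xi)$ separately via the two-sided estimate of $-H_x(\omega(x)|\xi)$, and then take the ratio to recover exactly $C(\beta,\lambda,p)$ from (\ref{A26}). Your remark that the $I_W/2$ term is absorbed into $\tfrac12\mathit{\Gamma}_{xy}(\beta,p)$ because $[n(x)n(y)]^{r/(p-r)}\geq 1$, and your final paragraph explaining why a uniform $\varkappa$ would fail in the unbounded-degree setting, make explicit what the paper leaves implicit.
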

\begin{proof}
By (\ref{A24}), with $\varkappa = \beta / n(x) n(y)$, and
(\ref{A25}) the relative Hamiltonian (\ref{ass1}) with
$\mathit{\Lambda} = \{x\}$ can be estimated as follows
\begin{eqnarray*}
& &  - \sum_{y\sim x} \left[ \frac{\beta}{n(x) n(y)}
\left(|\omega(x)|^p+ |\xi(y)|^p \right) + \frac{1}{2}
\mathit{\Gamma}_{xy}(\beta ,p)\right] - V(\omega (x))\\[.3cm] & & \qquad
\qquad \qquad \qquad \qquad
 \leq  - H_x (\omega (x) |\xi) \leq \\[.3cm]  & &
\leq \sum_{y\sim x} \left[ \frac{\beta}{n(x) n(y)}
\left(|\omega(x)|^p+ |\xi(y)|^p \right) + \frac{1}{2}
\mathit{\Gamma}_{xy}(\beta ,p)\right]\\[.2cm] & &  - a_V |\omega(x)|^q + c_V.
\end{eqnarray*}
Then
\begin{eqnarray*}
Z_x (\xi) & \geq & \exp\left\{ - \sum_{y\sim x} \left[
\frac{\beta|\xi(y)|^p }{n(x) n(y)} + \frac{1}{2}
\mathit{\Gamma}_{xy}(\beta ,p) \right]\right\}\\[.2cm] & \times & \int_{\mathbb{R}} \exp\left[
- \beta |\omega(x)|^p - V(\omega (x))\right]{\rm d}\omega(x),
\end{eqnarray*}
and
\begin{eqnarray*}
& & \int_{\mathbb{R}}  \exp\left[ \lambda |\omega (x)|^p - H_x
(\omega (x) |\xi)\right]{\rm d}\omega(x) \\[.2cm] & & \qquad \leq \exp\left\{c_V+  \sum_{y\sim x} \left[
\frac{\beta|\xi(y)|^p }{n(x) n(y)} + \frac{1}{2}
\mathit{\Gamma}_{xy}(\beta ,p) \right]\right\}\\[.2cm] &  & \qquad \times \int_{\mathbb{R}} \exp\left[
( \lambda + \beta)|\omega(x)|^p - a_V|\omega (x)|^q\right]{\rm
d}\omega(x),
\end{eqnarray*}
which clearly yields (\ref{A27}).
\end{proof}
\noindent Now, for $\lambda>0$, $p\in [p_0,q)$, $\mathit{\Lambda}
\Subset {\sf V}$, and a fixed $x\in \mathit{\Lambda}$, we set
\begin{equation}
  \label{A28}
 M_x (\lambda,p,\mathit{\Lambda}; \xi) =  \log\left\{ \int_{\mathit{\Omega}} \exp\left( \lambda |\omega(x)|^p \right)
   \pi_{\mathit{\Lambda}}({\rm d}\omega|\xi)\right\},
\end{equation}
which is obviously finite. Our aim  is to find an upper bound for
this quantity. Integrating both sides of (\ref{A27}) with respect to
$\pi_{\mathit{\Lambda}} (\cdot|\xi)$ and taking into account
(\ref{A19a}) we obtain
\begin{eqnarray}
  \label{A29}
\qquad \exp\left[ M_x (\lambda,p,\mathit{\Lambda}; \xi) \right]&
\leq & \exp\left( C(\beta,\lambda,p) + \sum_{y\sim x }
\mathit{\Gamma}_{xy}(\beta ,p)\right. \\[.2cm] & + & \left. \sum_{y\sim x, \ y\in
\mathit{\Lambda}^c}\frac{2 \beta
|\xi(y)|^p}{n(x)n(y) }\right)\nonumber \\[.2cm] & \times &
\int_{\mathit{\Omega}}\exp\left(\sum_{y\sim x, \ y\in
\mathit{\Lambda}}\frac{2 \beta |\omega(y)|^p}{n(x)n(y) }
\right)\pi_{\mathit{\Lambda}} ({\rm d}\omega |\xi).\nonumber
\end{eqnarray}
In the sequel, the parameter $\overline{\alpha}$ will be fixed. Then
for a given $\lambda$, the parameter $\beta$ will always be  chosen
in such a way that
\begin{equation}
  \label{A47}
2 \beta e^{\overline{\alpha}} < \lambda,
\end{equation}
which, in particular, yields
\begin{equation}
  \label{A30} \sum_{y\sim x} \frac{2\beta}{ \lambda n(x) n(y) } \leq 1.
\end{equation}
To estimate the integral in the latter line in (\ref{A29}) we use
the multiple H\"older inequality
\begin{equation} \label{hi}
\int \left( \prod_{i=1}^n \varphi_i^{\alpha_i} \right) {\rm d}\mu
\leq \prod_{i=1}^n  \left(\int \varphi_i {\rm d}\mu
\right)^{\alpha_i},
\end{equation}
in which $\mu$ is a probability measure, $\varphi_i \geq 0$
(respectively, $\alpha_i \geq 0$), $i=1, \dots , n$, are integrable
functions (respectively, numbers such that $\sum_{i=1}^n \alpha_i
\leq 1$). Applying this inequality in (\ref{A29}) and taking into
account (\ref{A30}) we arrive at
\begin{eqnarray}
\label{A31} \quad \qquad M_x (\lambda,p,\mathit{\Lambda}; \xi) &
\leq & C(\beta,\lambda,p) + \sum_{y\sim x}\mathit{\Gamma}_{xy}(\beta
,p) +\sum_{y\sim x, \ y\in
\mathit{\Lambda}^c}\frac{2 \beta |\xi(y)|^p}{n(x)n(y) } \\[.2cm]& + & \sum_{y\sim
x, \ y\in \mathit{\Lambda}}\frac{2 \beta }{\lambda n(x)n(y) }M_y
(\lambda,p,\mathit{\Lambda}; \xi).\nonumber
\end{eqnarray}
As the quantity we want to estimate appears in both sides of the
latter estimate, we make the following. For $\alpha \in
(\underline{\alpha},\overline{\alpha}]$, we set, cf. (\ref{A11}) and
(\ref{A12}),
\begin{equation}
  \label{A32}
\|M(\lambda,p,\mathit{\Lambda};\xi)\|_\alpha = \sum_{x\in
\mathit{\Lambda}}M_x (\lambda,p,\mathit{\Lambda}; \xi) \exp[- \alpha
\rho(o,x)],
\end{equation}
and obtain an upper bound for
$\|M(\lambda,p,\mathit{\Lambda};\xi)\|_\alpha$. To this end we
multiply both sides of (\ref{A31}) by $\exp[-\alpha \rho(o,x)]$ and
sum over $x\in \mathit{\Lambda}$. This leads us to
\begin{equation}
\label{A41} \|M(\lambda,p,\mathit{\Lambda};\xi)\|_{\alpha} \leq
\mathit{\Upsilon}^\alpha_1 + \mathit{\Upsilon}^\alpha_2 +
\mathit{\Upsilon}^\alpha_3 (\mathit{\Lambda}) +
\mathit{\Upsilon}^\alpha_4 (\mathit{\Lambda}).
\end{equation}
Here
\begin{equation}
  \label{A41a}
\mathit{\Upsilon}^\alpha_1 = C(\beta,\lambda,p) \sum_{x}\exp[-\alpha
\rho (o,x)],
\end{equation}
and
\begin{eqnarray}
  \label{A42}
\mathit{\Upsilon}^\alpha_2 = \gamma(\beta,p) \mathit{\Theta} (\alpha
; r/(p-r))\leq \gamma(\beta,p) \mathit{\Theta} (\alpha ; \theta).
\end{eqnarray}
The latter estimate holds since $p\geq p_0 = r+ r/\theta$. The term
corresponding to the third summand in (\ref{A31}) is estimated as
follows
\begin{eqnarray}
  \label{A45}
& & \quad  \sum_{x \in \mathit{\Lambda}} \exp[-\alpha \rho
(o,x)]\sum_{y \sim x, \ y \in \mathit{\Lambda}^c}
\frac{2\beta}{n (x) n (y)}|\xi(y)|^{p} \\[.2cm] & & \qquad
 \qquad \qquad  \leq \mathit{\Upsilon}^\alpha_3 (\mathit{\Lambda})
\ \stackrel{\rm def}{=} \   2 \beta e^\alpha \sum_{x \in
\mathit{\Lambda}^c}\exp[-\alpha \rho (o,x)]|\xi(x)|^{p} ,\nonumber
\end{eqnarray}
which is finite whenever $\xi\in L^p({\sf V}, w_\alpha)$, and tends
to zero as $\mathit{\Lambda} \rightarrow {\sf V}$.
 In a similar way, we get
\begin{eqnarray}
  \label{A46} & & \quad  \sum_{x \in \mathit{\Lambda}} \exp[-\alpha \rho
(o,x)] \sum_{y \sim x, \ y \in \mathit{\Lambda}}
\frac{2\beta}{\lambda n(x) n(y)}M_{y} (\lambda,p,\mathit{
 \Lambda}; \xi) \\[.2cm] & & \qquad
\qquad \qquad  \leq \mathit{\Upsilon}^\alpha_4 (\mathit{\Lambda}) \
\stackrel{\rm def}{=} \
 \frac{2\beta e^\alpha}{\lambda} \|M (\lambda,p,\mathit{
 \Lambda}; \xi)\|_\alpha.\nonumber
\end{eqnarray}
Recall that $\beta$ and $\lambda$ are supposed to obey (\ref{A47}).
Then from the estimates obtained above we get the following
\begin{equation}
  \label{A48}
\|M(\mathit{\lambda,p,\Lambda};\xi)]\|_\alpha \leq
\frac{\mathit{\Upsilon}^\alpha_1 + \mathit{\Upsilon}^\alpha_2 +
\mathit{\Upsilon}^\alpha_3 (\mathit{\Lambda})}{1 - 2 \beta e^\alpha
/\lambda},
\end{equation}
which yields
\begin{equation}
  \label{AA}
M_x(\mathit{\lambda,p,\Lambda};\xi)\leq C(\lambda, p, x, \xi),
\end{equation}
for some $C(\lambda, p, x, \xi)>0$, which is independent of
$\mathit{\Lambda}$, but obviously depends on the choice of the root
$o$.

\subsection{Weak compactness of the local Gibbs specification}

The result just obtained allows us to prove the next statement,
crucial for establishing the relative weak compactness of the family
$\{\pi_{\mathit{\Lambda}}(\cdot |\xi)\}_{\mathit{\Lambda}\Subset
{\sf V}}$ and the corresponding integrability estimates.
\begin{lemma}
  \label{A2lm}
Let $p\in [p_0,q)$ and $\alpha \in (\underline{\alpha},
\overline{\alpha}]$ be fixed. Then for every $\lambda >0$ and  $\xi
\in L^{p}({\sf V}, w_\alpha)$, one finds a positive constant
 $C(p,\alpha; \lambda, \xi)$, such that for all ${\mathit{\Lambda}}\Subset {\sf V}$,
 \begin{equation}
   \label{A51}
\int_{\mathit{\Omega}} \exp\left( \lambda
\|\omega\|^p_{p,\alpha}\right) \pi_{\mathit{\Lambda}} ({\rm d}\omega
|\xi) \leq C(p,\alpha; \lambda,\xi).
 \end{equation}
Furthermore, for the same $\lambda$, one finds a positive constant
$C( p,\alpha;\lambda)$, such that for all $\xi \in L^{p}({\sf V},
w_\alpha)$,
\begin{equation}
   \label{A520}
\limsup_{\mathit{\Lambda}\rightarrow {\sf V}}\int_{\mathit{\Omega}}
\exp\left( \lambda \|\omega\|^p_{p,\alpha}\right)
\pi_{\mathit{\Lambda}} ({\rm d}\omega |\xi) \leq C(p,\alpha;
\lambda).
\end{equation}
\end{lemma}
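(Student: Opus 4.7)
The plan is to reduce the bound (\ref{A51}) to the weighted-sum estimate (\ref{A48}) already established for $\|M(\lambda,p,\mathit{\Lambda};\xi)\|_\alpha$, via a suitable application of the multiple H\"older inequality (\ref{hi}). As a preliminary, I would note that the weight $w_\alpha$ is itself summable on ${\sf V}$: since $n(x)\geq 1$ implies $m_\theta(x) \geq 1$ for every $x$, one has $W_\alpha \stackrel{\rm def}{=} \sum_{x} w_\alpha(x) \leq \mathit{\Theta}(\alpha,\theta) < \infty$ by (\ref{ass}).

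For fixed $\mathit{\Lambda} \Subset {\sf V}$ and $\xi \in L^p({\sf V}, w_\alpha)$, I would split
\[
\lambda \|\omega\|_{p,\alpha}^{p} = \sum_{x\in\mathit{\Lambda}} \frac{w_\alpha(x)}{W_\alpha}\bigl( \lambda W_\alpha |\omega(x)|^{p}\bigr) + \lambda \sum_{x\in\mathit{\Lambda}^c} w_\alpha(x)|\xi(x)|^{p},
\]
noting that the second sum is deterministic under $\pi_{\mathit{\Lambda}}(\cdot|\xi)$, while the coefficients $w_\alpha(x)/W_\alpha$ in the first sum are nonnegative and sum to at most one. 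Exponentiating and applying (\ref{hi}) with $\mu = \pi_{\mathit{\Lambda}}(\cdot|\xi)$ and factors $\varphi_x = \exp(\lambda W_\alpha |\omega(x)|^{p})$ yields, after taking logarithms and recalling the notation (\ref{A28}) and (\ref{A32}),
\[
\log \int_{\mathit{\Omega}} \exp\bigl( \lambda \|\omega\|_{p,\alpha}^{p}\bigr) \pi_{\mathit{\Lambda}}({\rm d}\omega|\xi) \leq \frac{1}{W_\alpha}\|M(\lambda W_\alpha, p, \mathit{\Lambda};\xi)\|_\alpha + \lambda \sum_{x\in\mathit{\Lambda}^c} w_\alpha(x)|\xi(x)|^{p}.
\]
At this point I would invoke (\ref{A48}) with $\lambda$ replaced by $\lambda W_\alpha$, which is legitimate provided I pick $\beta>0$ small enough that $2\beta e^{\overline{\alpha}} < \lambda W_\alpha$ --- the appropriate rescaling of (\ref{A47}). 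The numerator $\mathit{\Upsilon}_1^\alpha + \mathit{\Upsilon}_2^\alpha + \mathit{\Upsilon}_3^\alpha(\mathit{\Lambda})$ is then bounded uniformly in $\mathit{\Lambda}$ because $\mathit{\Upsilon}_3^\alpha(\mathit{\Lambda}) \leq 2\beta e^\alpha \|\xi\|_{p,\alpha}^{p}$, while the boundary term is controlled by $\lambda \|\xi\|_{p,\alpha}^{p}$. Combining these delivers (\ref{A51}) with an explicit constant $C(p,\alpha;\lambda,\xi)$ that depends on $\xi$ only through $\|\xi\|_{p,\alpha}^{p}$.

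For the uniform statement (\ref{A520}), I would observe that as $\mathit{\Lambda}\to {\sf V}$ both $\mathit{\Upsilon}_3^\alpha(\mathit{\Lambda})$ and the boundary sum $\sum_{x\in\mathit{\Lambda}^c} w_\alpha(x)|\xi(x)|^{p}$ tend to zero by dominated convergence, since $\xi \in L^p({\sf V},w_\alpha)$ and $w_\alpha$ is absolutely summable. Consequently the limsup is controlled solely by the $\xi$-independent pieces $\mathit{\Upsilon}_1^\alpha$ and $\mathit{\Upsilon}_2^\alpha$ --- both finite constants depending only on $\beta, \lambda, p, \alpha$ --- divided by the strictly positive factor $1 - 2\beta e^{\overline{\alpha}}/(\lambda W_\alpha)$ and by $W_\alpha$. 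The main technical obstacle I anticipate is simply tracking that the rescaling $\lambda \mapsto \lambda W_\alpha$ remains compatible with (\ref{A47}); beyond this, the argument is essentially bookkeeping that stitches together the H\"older factorization with the previously obtained bound (\ref{A48}).
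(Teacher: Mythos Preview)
Your proposal is correct and follows essentially the same route as the paper's own proof: split off the deterministic $\mathit{\Lambda}^c$-part, apply the multiple H\"older inequality (\ref{hi}) to the $\mathit{\Lambda}$-part, and then feed the result into the already established bound (\ref{A48}). The only cosmetic difference is that the paper phrases the H\"older step as ``pick $\delta$ such that $(\lambda/\delta)\sum_{x\in\mathit{\Lambda}} w_\alpha(x)\leq 1$'' and writes the bound in terms of $\|M(\delta,p,\mathit{\Lambda};\xi)\|_\alpha$, whereas you make the explicit, $\mathit{\Lambda}$-independent choice $\delta=\lambda W_\alpha$; this is exactly what the paper needs implicitly to get a uniform-in-$\mathit{\Lambda}$ bound, so your version is, if anything, slightly more transparent.
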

\begin{proof}
By (\ref{ass2}) and (\ref{A12}), for any $\delta >0$, we have
\begin{eqnarray}
 \label{ass3}
& & \int_{\mathit{\Omega}} \exp\left( \lambda
\|\omega\|^p_{p,\alpha}\right) \pi_{\mathit{\Lambda}} ({\rm d}\omega
|\xi) = \exp \left(\lambda \sum_{x\in \mathit{\Lambda}^c}
|\xi(x)|^p w_\alpha (x) \right)\\[.2cm] & & \quad \qquad \times \int_{\mathit{\Omega}}
\prod_{x\in \mathit{\Lambda}}\left[\exp\left( \delta |\omega
(x)|^p\right) \right]^{\lambda w_\alpha (x)/\delta}
\pi_{\mathit{\Lambda}} ({\rm d}\omega |\xi).  \nonumber
\end{eqnarray}
Now we pick $\delta$, such that
\[
\frac{\lambda}{\delta}\sum_{x\in \mathit{\Lambda}} w_\alpha (x) \leq
1,
\]
and apply in (\ref{ass3}) the H\"older inequality (\ref{hi}). This
yields, see (\ref{A28}) and (\ref{A32}),
\begin{eqnarray} \label{A52}
 \int_{\mathit{\Omega}} \exp\left( \lambda
\|\omega\|^p_{p,\alpha}\right) \pi_{\mathit{\Lambda}} ({\rm d}\omega
|\xi)& \leq & \exp \left(\lambda \sum_{x\in \mathit{\Lambda}^c}
|\xi(x)|^p w_\alpha (x) \right)\\[.2cm] &\times& \exp\left[ (\lambda /\delta)
\|M(\delta,p,\mathit{\Lambda};\xi) \|_\alpha\right]. \nonumber
\end{eqnarray}
By (\ref{A48}) the set $\{{\rm
RHS(\ref{A52})}(\mathit{\Lambda})|\mathit{\Lambda}\Subset {\sf V}\}$
is bounded for every fixed $\xi\in L^{p}({\sf V}, w_\alpha)$. We
denote its upper bound by $C(p,\alpha; \lambda , \xi)$ and obtain
(\ref{A51}). The estimate (\ref{A520}) follows from (\ref{A52}) by
(\ref{A45} ), (\ref{A48}), and the fact that $\xi\in L^{p}({\sf V},
w_\alpha)$.
\end{proof}
\begin{corollary}
  \label{assco}
For every $\xi\in L^{p}({\sf V}, w_\alpha)$, the family
$\{\pi_{\mathit{\Lambda}}(\cdot|\xi)\}_{\mathit{\Lambda}\Subset {\sf
V}}\subset \mathcal{P}(\mathit{\Omega})$ is relatively weakly
compact.
\end{corollary}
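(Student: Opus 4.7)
The plan is to apply Prokhorov's theorem: since $\mathcal{P}(\mathit{\Omega})$ carries the weak topology generated by $C_{\rm b}(\mathit{\Omega})$, it suffices to show that $\{\pi_{\mathit{\Lambda}}(\cdot|\xi)\}_{\mathit{\Lambda}\Subset{\sf V}}$ is tight in $\mathit{\Omega}$. The natural candidates for compact sets carrying most of the mass are the balls $B_{p,\alpha}(R)$ introduced in (\ref{ball}), with $\alpha$ and $p$ as in the hypothesis on $\xi$.

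First I would check that each ball $B_{p,\alpha}(R)$ is compact in $\mathit{\Omega}=\mathbb{R}^{\sf V}$ equipped with the product topology. If $\|\omega\|_{p,\alpha}\le R$, then the coordinatewise bound $|\omega(x)|\le R\,w_\alpha(x)^{-1/p}$ holds, so $B_{p,\alpha}(R)$ is contained in the Tychonoff-compact product $\prod_{x\in{\sf V}} [-R\,w_\alpha(x)^{-1/p},\,R\,w_\alpha(x)^{-1/p}]$. Moreover, $\omega\mapsto\|\omega\|_{p,\alpha}^p$ is the pointwise supremum of the continuous finite-volume functionals $\omega\mapsto\sum_{x\in\mathit{\Lambda}}|\omega(x)|^p w_\alpha(x)$, hence lower semi-continuous on $\mathit{\Omega}$, so $B_{p,\alpha}(R)$ is a closed subset of a compact set, and therefore compact.

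Next I would feed the exponential moment bound (\ref{A51}) of Lemma \ref{A2lm} into Chebyshev's inequality to obtain, for any fixed $\lambda>0$,
\[
\pi_{\mathit{\Lambda}}\bigl(\mathit{\Omega}\setminus B_{p,\alpha}(R)\,\big|\,\xi\bigr)\le C(p,\alpha;\lambda,\xi)\,\exp\bigl(-\lambda R^p\bigr),
\]
uniformly in $\mathit{\Lambda}\Subset{\sf V}$. Given $\epsilon>0$, choosing $R_\epsilon$ so large that the right-hand side is below $\epsilon$ produces a compact set $K_\epsilon=B_{p,\alpha}(R_\epsilon)$ with $\pi_{\mathit{\Lambda}}(K_\epsilon|\xi)\ge 1-\epsilon$ for every $\mathit{\Lambda}$. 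Tightness follows, and Prokhorov's theorem delivers the claimed relative weak compactness. There is no real obstacle here: the analytical labour has already been carried out in Lemma \ref{A2lm}. The only conceptual point worth highlighting is the mismatch of topologies—the $L^p$-type balls, which are \emph{not} compact in their own norm topology, become compact once one passes to the much weaker product topology on $\mathit{\Omega}$, which is precisely what lets the uniform exponential integrability estimate furnish tightness.
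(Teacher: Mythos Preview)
Your proof is correct and follows exactly the same route as the paper: compactness of the balls $B_{p,\alpha}(R)$ in the product topology, combined with the uniform exponential bound (\ref{A51}) and Prokhorov's theorem. The paper dismisses the compactness of the balls with the phrase ``for obvious reasons'' and leaves the Chebyshev step implicit; you have simply spelled out both of these points in detail.
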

\begin{proof}
For obvious reasons, the balls $\{\omega \ | \ \|\omega\|_{p,\alpha}
\leq R\}$, $R>0$, are compact in $\mathit{\Omega}$ for any fixed
$\alpha \in (\underline{\alpha}, \overline{\alpha}]$ and $p\in
[p_0,q)$. Thus, the proof follows from (\ref{A51}) by  Prokhorov's
theorem.
\end{proof}

We recall that $C_{\rm b} (\mathit{\Omega})$ stands for the set of
bounded continuous functions $f:\mathit{\Omega} \rightarrow
\mathbb{R}$.
 To prove that the accumulation points of the family
$\{\pi_{\mathit{\Lambda}} (\cdot|\xi)\}_{\mathit{\Lambda}\Subset
{\sf V}}\subset \mathcal{P}(\mathit{\Omega})$ are Gibbs measures, we
use the fact that this family possesses the following (Feller)
property. For a fixed $\mathit{\Lambda}\Subset {\sf V}$, we consider
\begin{equation}
  \label{map}
C_{\rm b} (\mathit{\Omega}) \ni f \mapsto \pi_{\mathit{\Lambda}}
(f|\cdot) \ \stackrel{\rm def}{=} \
\int_{\mathit{\Omega}}f(\omega)\pi_{\mathit{\Lambda}} ({\rm
d}\omega|\cdot).
\end{equation}
\begin{lemma}
  \label{asslm}
For every $\mathit{\Lambda}\Subset {\sf V}$, (\ref{map}) maps
$C_{\rm b} (\mathit{\Omega})$ into itself.
\end{lemma}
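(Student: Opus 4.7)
The plan is to verify that the map $\xi \mapsto \pi_{\mathit{\Lambda}}(f|\xi)$ is both bounded and continuous on $\mathit{\Omega}$ (with its product topology). Boundedness is immediate: since $\pi_{\mathit{\Lambda}}(\cdot|\xi)$ is a probability measure on $\mathit{\Omega}$, we have $|\pi_{\mathit{\Lambda}}(f|\xi)| \leq \|f\|_\infty$ uniformly in $\xi$. The entire substance of the lemma therefore reduces to continuity, and I would establish it by a standard dominated-convergence argument applied separately to the numerator and the partition function in the definition (\ref{ass2}).

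The key observation to exploit is that, because $\mathit{\Lambda}\Subset {\sf V}$ and the graph is locally finite, the boundary set $\partial \mathit{\Lambda} = \{y\in \mathit{\Lambda}^c : y\sim x \text{ for some } x\in \mathit{\Lambda}\}$ is finite. Hence $H_{\mathit{\Lambda}}(\omega_{\mathit{\Lambda}}|\xi)$ depends on $\xi$ only through the finitely many values $\{\xi(y): y\in \partial\mathit{\Lambda}\}$. So if $\xi_n \to \xi$ in the product topology, then $\xi_n(y)\to \xi(y)$ for each $y\in\partial\mathit{\Lambda}$, the numbers $\{\xi_n(y): n\in\mathbb{N}, y\in\partial\mathit{\Lambda}\}$ are uniformly bounded by some $K$, and continuity of $W$ gives $H_{\mathit{\Lambda}}(\omega_{\mathit{\Lambda}}|\xi_n) \to H_{\mathit{\Lambda}}(\omega_{\mathit{\Lambda}}|\xi)$ pointwise in $\omega_{\mathit{\Lambda}}\in \mathbb{R}^{|\mathit{\Lambda}|}$. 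Likewise, continuity of $f$ on $\mathit{\Omega}$ and product-topology convergence of $\omega_{\mathit{\Lambda}}\times \xi_{n,\mathit{\Lambda}^c}$ to $\omega_{\mathit{\Lambda}}\times \xi_{\mathit{\Lambda}^c}$ yield pointwise convergence of the integrands.

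To apply dominated convergence I need an integrable majorant for $\exp[-H_{\mathit{\Lambda}}(\omega_{\mathit{\Lambda}}|\xi_n)]$ that is valid uniformly in $n$. Using (\ref{A150}) to bound $|W|$ by $I_W/2 + (J_W/2)(|\omega(x)|^r + |\xi_n(y)|^r)$ on boundary edges (so the boundary contribution is at most a constant depending on $K$, $|\mathit{\Lambda}|$, plus $(J_W/2)\sum_{x\in\mathit{\Lambda}}n(x)|\omega(x)|^r$), and using (\ref{A16}) on the on-site part to produce the control $a_V|\omega(x)|^q - c_V$, I obtain
\begin{equation*}
-H_{\mathit{\Lambda}}(\omega_{\mathit{\Lambda}}|\xi_n) \leq M_1(K,\mathit{\Lambda}) + M_2(\mathit{\Lambda}) \sum_{x\in\mathit{\Lambda}} |\omega(x)|^r - a_V \sum_{x\in\mathit{\Lambda}} |\omega(x)|^q,
\end{equation*}
with constants independent of $n$. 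Since $q > r$ by (\ref{p}), the right-hand side, exponentiated, is Lebesgue integrable over $\mathbb{R}^{|\mathit{\Lambda}|}$, and multiplying by $\|f\|_\infty$ gives the desired dominant. Dominated convergence then yields
\begin{equation*}
\int_{\mathbb{R}^{|\mathit{\Lambda}|}} f(\omega_{\mathit{\Lambda}}\times \xi_{n,\mathit{\Lambda}^c}) e^{-H_{\mathit{\Lambda}}(\omega_{\mathit{\Lambda}}|\xi_n)}\, {\rm d}\omega_{\mathit{\Lambda}} \longrightarrow \int_{\mathbb{R}^{|\mathit{\Lambda}|}} f(\omega_{\mathit{\Lambda}}\times \xi_{\mathit{\Lambda}^c}) e^{-H_{\mathit{\Lambda}}(\omega_{\mathit{\Lambda}}|\xi)}\, {\rm d}\omega_{\mathit{\Lambda}}
\end{equation*}
and the same argument with $f\equiv 1$ gives $Z_{\mathit{\Lambda}}(\xi_n)\to Z_{\mathit{\Lambda}}(\xi)$. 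Since $Z_{\mathit{\Lambda}}(\xi)>0$ (the integrand being strictly positive), the quotient $\pi_{\mathit{\Lambda}}(f|\xi_n)$ converges to $\pi_{\mathit{\Lambda}}(f|\xi)$, proving continuity. The only point that requires any care is producing the uniform-in-$n$ majorant; everything else is a direct verification, and the main obstacle — securing integrability on $\mathbb{R}^{|\mathit{\Lambda}|}$ — is dispatched by the stability condition (\ref{A16}) together with the strict inequality $q>r$ that already underlies the choice of $p_0$.
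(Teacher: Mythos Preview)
Your proof is correct and follows essentially the same route as the paper's: immediate boundedness, then continuity via Lebesgue's dominated convergence applied to both the numerator and $Z_{\mathit{\Lambda}}(\xi)$, using continuity of $W$ and the finiteness of $\partial\mathit{\Lambda}$. The only minor difference is that the paper's sketch points to the moment estimate (\ref{A48}) and the H\"older inequality (\ref{hi}) as ingredients, whereas you obtain the integrable majorant more directly from the growth bounds (\ref{A150}) and (\ref{A16}) together with $q>r$; your route is slightly more elementary and entirely adequate here.
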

The proof of this lemma is quite standard. The boundedness of
$\pi_{\mathit{\Lambda}}(f|\cdot)$ is immediate. Its continuity
 follows from the continuity of $W$, see Assumption \ref{ASS}, and the estimates (\ref{A48}) and
(\ref{hi}) by Lebesgue's dominated convergence theorem. For more
details, we refer the reader to the proof of the corresponding lemma
in \cite{[KP]}.

\begin{corollary}
  \label{asscon}
For every $p\in [p_0, q)$ and $\alpha \in (\underline{\alpha},
\overline{\alpha}]$, the set $\mathcal{G}_{p,\alpha}$ is non-void.
\end{corollary}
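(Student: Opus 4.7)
The plan is to realise a measure in $\mathcal{G}_{p,\alpha}$ as a weak accumulation point of the family $\{\pi_{\mathit{\Lambda}}(\cdot|\xi)\}_{\mathit{\Lambda}\Subset {\sf V}}$ for a convenient boundary condition. Since the zero configuration lies in $L^p({\sf V},w_\alpha)$, I would take $\xi \equiv 0$; by Corollary \ref{assco} the family is then relatively weakly compact in $\mathcal{P}(\mathit{\Omega})$. Fix any exhausting sequence $\mathit{\Lambda}_n \Subset {\sf V}$ with $\mathit{\Lambda}_n \nearrow {\sf V}$; after passing to a subsequence (not relabelled) we obtain that $\pi_{\mathit{\Lambda}_n}(\cdot|\xi)$ converges weakly to some $\mu \in \mathcal{P}(\mathit{\Omega})$.

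Next I would verify the DLR equation (\ref{dlr1}) for $\mu$. Given any $\mathit{\Delta}\Subset {\sf V}$ and $f\in C_{\rm b}(\mathit{\Omega})$, for all $n$ large enough we have $\mathit{\Delta} \subset \mathit{\Lambda}_n$, so the consistency relation (\ref{A19a}) gives
\[
\pi_{\mathit{\Lambda}_n}(f|\xi) = \int_{\mathit{\Omega}} \pi_{\mathit{\Delta}}(f|\omega) \, \pi_{\mathit{\Lambda}_n}({\rm d}\omega|\xi).
\]
The left-hand side tends to $\mu(f)$ by definition of weak convergence; by the Feller property proved in Lemma \ref{asslm}, $\pi_{\mathit{\Delta}}(f|\cdot)$ is itself in $C_{\rm b}(\mathit{\Omega})$, whence the right-hand side converges to $\int \pi_{\mathit{\Delta}}(f|\omega)\mu({\rm d}\omega)$. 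Thus $\mu$ satisfies (\ref{dlr1}) for every such $\mathit{\Delta}$ and $f$, and so $\mu\in \mathcal{G}$.

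It remains to establish the support property $\mu\bigl[L^p({\sf V}, w_\alpha)\bigr] = 1$, for which I would leverage the uniform bound (\ref{A520}) furnished by Lemma \ref{A2lm}. For $\mathit{\Lambda}_0 \Subset {\sf V}$ and $M>0$, the truncated functional
\[
g_{\mathit{\Lambda}_0,M}(\omega) = \min\!\left\{\exp\!\left(\lambda \sum_{x\in \mathit{\Lambda}_0} |\omega(x)|^p w_\alpha(x)\right),\, M \right\}
\]
lies in $C_{\rm b}(\mathit{\Omega})$; weak convergence together with the $\limsup$-bound (\ref{A520}) then yields $\mu(g_{\mathit{\Lambda}_0,M}) \leq C(p,\alpha;\lambda)$. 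Letting first $M\uparrow \infty$ and then $\mathit{\Lambda}_0 \uparrow {\sf V}$ via monotone convergence produces
\[
\int_{\mathit{\Omega}} \exp\!\left(\lambda \|\omega\|^p_{p,\alpha}\right) \mu({\rm d}\omega) \leq C(p,\alpha;\lambda) < \infty,
\]
so $\|\omega\|_{p,\alpha} < \infty$ for $\mu$-almost every $\omega$, i.e.\ $\mu \in \mathcal{G}_{p,\alpha}$.

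The only delicate point is this last double truncation: weak convergence only bounds integrals against bounded continuous test functions, so the passage to the unbounded exponential moment must be routed through the cut-offs $g_{\mathit{\Lambda}_0,M}$, after which one invokes the $\limsup$-bound uniform in $\mathit{\Lambda}$ that constitutes the real content of Lemma \ref{A2lm}. Everything else is a standard weak-compactness plus Feller-property argument.
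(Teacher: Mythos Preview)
Your proof is correct and follows essentially the same route as the paper: weak compactness from Corollary~\ref{assco}, passage to the DLR equation via the Feller property of Lemma~\ref{asslm}, and the support property from the uniform exponential bound~(\ref{A520}). The only cosmetic difference is in the truncation step---the paper uses the single cut-off $F_N(\omega)=\exp[\lambda\min\{\|\omega\|^p_{p,\alpha};N\}]$ (bounded lower semi-continuous, so Portmanteau applies), whereas you use the double cut-off $g_{\mathit{\Lambda}_0,M}\in C_{\rm b}(\mathit{\Omega})$ followed by two monotone limits; both are standard and equivalent.
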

\begin{proof}
For every $\mathit{\Lambda}\Subset {\sf V}$ and $\xi \in L^{p}({\sf
V}, w_\alpha)$, by (\ref{ass2}) each $\pi_{\mathit{\Lambda}} (\cdot
|\xi)$ is supported by the set \[\{\omega =
\omega_{\mathit{\Lambda}} \times \xi_{\mathit{\Lambda}^c}  \ | \
\omega_{\mathit{\Lambda}} \in
\mathit{\Omega}_{\mathit{\Lambda}}\},\] which yields
\begin{equation}
  \label{Z}
\pi_{\mathit{\Lambda}} \left[L^{p}({\sf V}, w_\alpha)|\xi\right] =1.
\end{equation}
Let us fix some $\xi\in L^{p}({\sf V}, w_\alpha)$.  By Corollary
\ref{assco} there exists an increasing sequence
$\{\mathit{\Lambda}_n\}_{n\in \mathbb{N}}$, which exhausts ${\sf
V}$, such that $\{\pi_{\mathit{\Lambda}_n}(\cdot |\xi)\}_{n\in
\mathbb{N}}$ weakly converges to a certain $\mu \in
\mathcal{P}(\mathit{\Omega})$. Let us show that this $\mu$ also
solves the DLR equation. For any $\mathit{\Lambda}$, one finds
$n'\in \mathbb{N}$, such that $\mathit{\Lambda}\subset
\mathit{\Lambda}_{n}$ for all $n\geq n'$. For such $n$ and $f\in
C_{\rm b}(\mathit{\Omega})$, by (\ref{A19a}) we have
\begin{equation} \label{fel}
\int_{\mathit{\Omega}}
\pi_\mathit{\Lambda}(f|\omega)\pi_{\mathit{\Lambda}_n} ({\rm
d}\omega|\xi) = \pi_{\mathit{\Lambda}_n} (f|\xi).
\end{equation}
Then we pass here to the limit $n\rightarrow +\infty$ and obtain
that $\mu\in \mathcal{G}$, see (\ref{dlr1}) and Lemma \ref{asslm}.
To prove that $\mu$ is supported by $L^p({\sf V}, w_\alpha)$ we show
that this measure obeys the estimate (\ref{A210A}). For $\lambda>0$,
we set
\begin{equation}
  \label{A63}
F_N (\omega) = \exp\left[\lambda \min\left\{\|\omega\|^p_{p,\alpha};
N \right\} \right], \quad \ N\in \mathbb{N},
\end{equation}
which is a lower semi-continuous function on $\mathit{\Omega}$. Then
by (\ref{A520}) and the weak convergence
$\pi_{\mathit{\Lambda}_n}(\cdot|\xi) \rightarrow \mu$, we have
\begin{eqnarray} \label{FE}
\int_{\mathit{\Omega}} F_N (\omega) \mu({\rm d}\omega) & \leq &
\lim_{n\rightarrow +\infty} \int_{\mathit{\Omega}} F_N (\omega)
\pi_{\mathit{\Lambda}_n}({\rm d}\omega|\xi)\\[.2cm] & \leq & C(p,\alpha;
\lambda),\nonumber
\end{eqnarray}
where the latter constant is the same as in (\ref{A520}).
Thereafter, the proof of (\ref{A210A}), with the same constant,
follows by  B. Levi's monotone convergence theorem. Hence, $\mu \in
\mathcal{G}_{p,\alpha}$.
\end{proof}

{\sc Proof of Theorem \ref{T22}.} Just above we have proven that the
accumulation points of the family $\{\pi_{\mathit{\Lambda
}}(\cdot|\xi)\}$, $\xi \in L^p({\sf V}, w_\alpha)$, obey
(\ref{A520}). Let us extend this to all $\mu \in
\mathcal{G}_{p,\alpha}$. For such $\mu$, by (\ref{dlr}), Fatou's
lemma, and the estimate (\ref{A520}) we get
\begin{eqnarray*}
 & & \int_{\mathit{\Omega}}F_N (\omega)\mu({\rm d}\omega) =
 \limsup_{\mathit{\Lambda}\rightarrow {\sf V}}
\int_{\mathit{\Omega}} \left[\int_{\mathit{\Omega}}F_N (\omega)
\pi_{\mathit{\Lambda}} ({\rm d}\omega|\xi) \right]\mu({\rm d}\xi)
\\[.2cm] & & \ \qquad \leq
\int_{\mathit{\Omega}}\left[ \limsup_{\mathit{\Lambda}\rightarrow
{\sf V}} \int_{\mathit{\Omega}}F_N (\omega) \pi_{\mathit{\Lambda}}
({\rm d}\omega|\xi) \right]\mu({\rm d}\xi)
\\[.2cm]  & & \ \qquad \leq \int_{\mathit{\Omega}}
  \left[ \limsup_{\mathit{\Lambda}\rightarrow {\sf V}} \int_{\mathit{\Omega}}
  \exp\left(\lambda \|\omega\|^p_{p,\alpha}\right)\pi_{\mathit{\Lambda}}
  ({\rm d}\omega|\xi) \right]\mu({\rm d}\xi)\\[.2cm] & & \ \qquad \leq C(p,\alpha;\lambda).
\end{eqnarray*}
Then we again apply B. Levi's theorem and obtain (\ref{A210A}). The
proof of (\ref{A210}) follows by (\ref{AA}) along the same line of
arguments.

In view of (\ref{A210A}), by Prokhorov's theorem  the set
$\mathcal{G}_{p,\alpha}$ is relatively weakly compact. Clearly, all
its accumulation points solve the DLR equation (\ref{dlr1}); hence,
$\mathcal{G}_{p,\alpha}$ is weakly compact.
 $\square$

\section{Repulsive graphs}
\label{RGsec}

 In the remaining part of the paper, we present a
family of unbounded degree graphs, which obey  the estimate
(\ref{ass}). A crucial property of such graphs is that vertices of
large degree are located at large distances from each other.

\subsection{The family of graphs and the main statement}
For $n_* \in \mathbb{N}$, we set
\begin{equation}
  \label{NU}
{\sf V}_* = \{ x\in {\sf V} \ | \ n(x) \leq n_*\}, \qquad {\sf
V}_*^c = {\sf V} \setminus {\sf V}_*.
\end{equation}
\begin{definition}
For an integer $n_*>2$ and a strictly increasing function $\phi:
(n_*, + \infty) \rightarrow (0, + \infty)$, the family
$\mathbb{G}(n_*, \phi)$ consists of those graphs ${\sf G}= ({\sf V},
{\sf E})$, for which
 the path distance obeys the condition
\begin{equation} \label{I}
\forall x,y\in {\sf V}_*^c: \qquad \rho(x,y) \geq \phi [n(x,y)],
\end{equation}
where
\begin{equation}
  \label{Qq0}
  n(x,y) = \max\{n(x); n(y)\}.
\end{equation}
No restrictions are imposed on $\rho(x,y)$ if either $x$ or $y$
belongs to ${\sf V}_*$.
\end{definition}
Let us make some comments. For a given $x\in {\sf V}_*^c$, for
\begin{equation*}
   {\sf K} (x)\
\stackrel{\rm def}{ =} \  \{ y \in {\sf V} \ | \ \rho (y,x) < \phi
[n(x)]\},
\end{equation*}
 by
(\ref{I}) one has that ${\sf K} (x){\cap} {\sf V}_*^c = \{x\}$,
i.e., such $x$  `repels' all  vertices $y\in{\sf V}_*^c$ from the
ball ${\sf K} (x)$. For the sake of convenience, we shall assume
that ${\sf K}(x)$ contains the neighborhood of $x$, which is
equivalent to assuming that
\begin{equation}
\label{KK} \phi (n_*+1) > 1.
\end{equation}
The graphs introduced and studied in \cite{[BD]} were defined by a
condition, which can be written in the form, cf. eqs. (3.8) and
(3.9) in \cite{[BD]},
\begin{equation} \label{NU3}
\rho(x,y) \geq \phi [m(x,y)], \qquad m(x,y) \ \stackrel{\rm def}{=}
\ \min\{ n(x); n(y)\}.
\end{equation}
In this case, a vertex $x$ `repels' from the ball $\{ y | \rho(y,x)
< \phi[n(x)]\}$ only those $y$'s, for which $n(y) \geq n(x)$. We
employ (\ref{I}) rather than (\ref{NU3}) in view of its application
in Lemma \ref{Jlm} below, see Remark \ref{NUrk} for further
comments. The concrete choice of the function $\phi$ in Theorem
\ref{Itm} is discussed in Remark \ref{NU1rk} below.
\begin{theorem}
 \label{Itm}
Let ${\sf G}$ be in  ${\sf G} \in \mathbb{G}(n_*,\phi)$ with $\phi$
having the form
\begin{equation} \label{march}
\phi (b) = \upsilon \log b \left[ \log\log b\right]^{1+\varepsilon},
\quad \ \ \upsilon, \varepsilon >0, \ \ \  b \geq n_* + 1,
\end{equation}
where $\upsilon$ and $\varepsilon$ are such that (\ref{KK}) holds.
Then for any $\theta>0$, there exists $\underline{\alpha} \geq0$,
which may depend on $\theta$, $n_*$, $\upsilon$, and $\varepsilon$,
such that $\mathit{\Theta} (\alpha , \theta) < \infty$ whenever
$\alpha > \underline{\alpha}$.
\end{theorem}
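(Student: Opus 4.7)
The plan is a two-stage argument matching the announcement in the introduction. First, I would establish the intermediate sphere-sum estimate
\[
T_N \ := \ \sum_{y:\,\rho(o,y)=N} [n(y)]^{1+\theta} \ \leq \ C e^{a N}, \qquad N \geq N_o,
\]
valid for any $\theta>0$ with $C,a>0$ depending on $\theta, n_*, \upsilon, \varepsilon$; then deduce the required summability of $\mathit{\Theta}(\alpha,\theta)$ for $\alpha$ above a computable threshold.

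For the first stage, the base geometric input is that any two distinct $y,y' \in {\sf V}_*^c$ with $\rho(o,y),\rho(o,y')\leq N$ satisfy $\rho(y,y')\leq 2N$, so (\ref{I}) forces $\max\{n(y),n(y')\} \leq D_N := \phi^{-1}(2N)$. Picking $N_o$ large enough that the ball of radius $N_o$ around $o$ contains at least two elements of ${\sf V}_*^c$ (the degenerate cases of at most one such vertex being absorbed into a finite correction), this gives $n(y)\leq D_N$ for every $y$ in the ball of radius $N$. The specific $\phi$ in the statement makes $\log D_N \sim 2N/[\upsilon(\log N)^{1+\varepsilon}] = o(N)$, so $D_N$ is sub-exponential. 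Next, (\ref{I}) combined with (\ref{KK}) shows that distinct high-degree vertices are non-adjacent and, more finely, that the set of vertices of degree $\geq d$ is $\phi(d)$-separated for every $d>n_*$. Splitting $T_N$ by dyadic degree ranges,
\[
T_N \ \leq \ n_*^{1+\theta}\,|S_N| \ + \sum_{d=n_*+1,\,\mathrm{dyadic}}^{D_N}(2d)^{1+\theta}\, |\{y\in S_N : n(y)\in[d,2d)\}|,
\]
I would control each dyadic cardinality by packing: around each vertex of degree $\geq d$, the open ball of radius $\phi(d)/2$ is disjoint from the analogous balls of other such vertices, and it has size at least $d$ (from its low-degree neighbors). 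Combined with the induction hypothesis $|B_{N+\phi(d)/2}(o)| \leq Ce^{a(N+\phi(d)/2)}$, this bounds the cardinality of the $d$-layer by $Ce^{aN}e^{a\phi(d)/2}/d$; the choice $a$ essentially $\log n_*$, together with the specific asymptotics $\log\log D_N \sim \log N$ flowing from $\phi$, then makes the dyadic sum converge at rate $e^{aN+o(N)}$ and closes the induction.

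For the second stage, Young's inequality $[n(x)n(y)]^\theta \leq \tfrac12(n(x)^{2\theta}+n(y)^{2\theta})$ applied to (\ref{ass})-(\ref{Pq}), together with the re-indexing $\sum_{x\sim y}e^{-\alpha\rho(o,x)} \leq e^\alpha n(y) e^{-\alpha\rho(o,y)}$, reduces
\[
\mathit{\Theta}(\alpha,\theta) \ \leq \ \tfrac{1}{2}(1+e^\alpha)\sum_{N\geq 0} e^{-\alpha N}\, T_N^{(2\theta)}, \qquad T_N^{(2\theta)}:=\sum_{\rho(o,y)=N}[n(y)]^{1+2\theta};
\]
applying the first-stage estimate with $2\theta$ in place of $\theta$ then forces the geometric series to converge for every $\alpha$ exceeding the corresponding exponential rate, which gives $\underline{\alpha}$.

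The main obstacle is closing the induction in the first stage: a naive iteration of the recursion $|S_{N+1}| \leq n_*|S_N| + D_N\,|S_N\cap{\sf V}_*^c|$ produces super-exponential growth of order $\exp\bigl(N^2/(\log N)^{1+\varepsilon}\bigr)$. The point of the precise choice of $\phi$ is to combine two effects: $D_N$ sub-exponential (from the factor $\log b$) and $\phi(d)$ growing fast enough that the dyadic packing estimate contributes only polynomially in $d$ (this is where the correction $(\log\log b)^{1+\varepsilon}$ is crucial). Their interplay is what produces a genuine exponential rate $a$ and thereby makes $\underline{\alpha}$ finite.
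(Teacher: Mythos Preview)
Your two-stage architecture and the Stage~2 reduction are essentially right and match the paper (which uses the non-adjacency of high-degree vertices instead of Young's inequality, landing on exponent $1+\theta$ rather than $1+2\theta$, an immaterial difference). The bound $n(y)\le D_N=\phi^{-1}(2N)$ on balls is also correct.

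The gap is in Stage~1: the volume-packing induction cannot close. Your ``induction hypothesis'' $|B_{N+\phi(d)/2}(o)|\le Ce^{a(N+\phi(d)/2)}$ lives at a radius \emph{larger} than $N$, so it is not available in an upward induction on $N$. Even if one simply grants the ball bound at all scales, your own arithmetic makes the inner dyadic sum $\sum_{d\le D_N}d^{\theta}e^{a\phi(d)/2}$ of order $e^{aN+o(N)}$ (the top term has $\phi(D_N)=2N$, so $e^{a\phi(D_N)/2}=e^{aN}$); multiplied by the packing prefactor $Ce^{aN}$ this yields $T_N\le Ce^{2aN+o(N)}$, not $Ce^{aN}$, and feeding this back doubles the rate at every step rather than stabilising. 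The paper bypasses the circularity by trading volume packing for one-dimensional packing along paths: it bounds $|{\sf S}(N,o)|$ by $|\mathit{\Sigma}_N(o)|$, the number of simple length-$N$ paths from $o$, and the latter by $\max_{\vartheta}\prod_{z\in{\sf V}_\vartheta}n(z)$. Along a single path of length $N$, vertices of degree $\ge b_k$ are $\phi(b_k)$-separated and hence number at most $2N/\phi(b_k)$; with $b_k=\exp(e^k)$ the specific form (\ref{march}) gives $\sum_k(\log b_{k+1})/\phi(b_k)<\infty$, whence $\log\prod_z n(z)\le N\bigl(\log n_*+\mathrm{const}\bigr)$ directly. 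No self-referential induction is needed because the quantity being packed against is now the path length $N$, known a~priori, rather than the ball volume you are trying to control.
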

The proof of Theorem \ref{Itm} is given at the very end of this
subsection. It is preceded by and based on Lemmas \ref{Klm} and
\ref{Jlm}, which in turn are proven  in the remaining part of the
paper.
 For
$N\in \mathbb{N}$ and $x\in {\sf V}$, we set
\begin{eqnarray}
 \label{J1}
{\sf S} (N,x) & = & \{ y\in {\sf V} \ | \ \rho (x,y)= N\},\\[.2cm]
{\sf B} (N,x) & = & \{ y\in {\sf V} \ | \ \rho (x,y)\leq N\},
\nonumber
\end{eqnarray}
and
\begin{equation}
  \label{J0}
T_x (\alpha , \theta) = \sum_{y} [n(y)]^{1+\theta}
\exp\left[-\alpha\rho(x,y)\right], \quad \ \ \alpha,  \theta >0.
\end{equation}
\begin{lemma} \label{Klm}
Let ${\sf G}$ be in $\mathbb{G}(n_*, \phi)$ with $\phi$ obeying
(\ref{KK}). Then for every positive $\theta$ and $\alpha$,
\begin{equation}
 \label{J}
\mathit{\Theta} (\alpha , \theta) \leq n_*^\theta (e^\alpha +1)T_o
(\alpha , \theta).
\end{equation}
\end{lemma}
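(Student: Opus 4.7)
The plan is to use the repulsion hypothesis to bound the product $n(x)n(y)$ along each edge in terms of a single degree raised to a higher power, and then rewrite the resulting sum as $T_o(\alpha,\theta)$ up to a factor from the triangle inequality.

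First, I would observe that condition (\ref{KK}) together with (\ref{I}) rules out edges joining two high-degree vertices. Indeed, if $x,y \in {\sf V}_*^c$ with $x \sim y$, then $\rho(x,y)=1$, whereas (\ref{I}) combined with the monotonicity of $\phi$ and (\ref{KK}) forces $\rho(x,y)\ge \phi[n(x,y)] \ge \phi(n_*+1)>1$, a contradiction. Consequently, for every edge $\langle x,y\rangle$, at least one endpoint lies in ${\sf V}_*$, i.e.\ $\min\{n(x),n(y)\}\le n_*$. Factoring out the smaller degree gives the pointwise bound
\[
[n(x)n(y)]^\theta \;\le\; n_*^\theta\,\max\{n(x),n(y)\}^\theta \;\le\; n_*^\theta\bigl([n(x)]^\theta + [n(y)]^\theta\bigr)
\]
valid for every pair of neighbors.

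Next I would substitute this estimate into the definition of $\mathit{\Theta}(\alpha,\theta)=\sum_x\sum_{y\sim x}[n(x)n(y)]^\theta w_\alpha(x)$ and split the right-hand side into two pieces. The $[n(x)]^\theta$-piece is handled immediately: $\sum_{y\sim x}1 = n(x)$, so
\[
\sum_x [n(x)]^\theta\,n(x)\,w_\alpha(x) \;=\; T_o(\alpha,\theta).
\]
For the $[n(y)]^\theta$-piece I would interchange the order of summation to put the sum on $y$ outside, obtaining $\sum_y [n(y)]^\theta \sum_{x\sim y} w_\alpha(x)$.

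The remaining step uses only graph-metric information: for any neighbors $x\sim y$ the triangle inequality gives $\rho(o,x)\ge \rho(o,y)-1$, so $w_\alpha(x)\le e^\alpha w_\alpha(y)$, and $\sum_{x\sim y} w_\alpha(x) \le n(y)\,e^\alpha w_\alpha(y)$. Plugging this in yields
\[
\sum_x\sum_{y\sim x}[n(y)]^\theta w_\alpha(x) \;\le\; e^\alpha\sum_y[n(y)]^{1+\theta} w_\alpha(y) \;=\; e^\alpha T_o(\alpha,\theta).
\]
Adding the two pieces and multiplying by $n_*^\theta$ gives exactly (\ref{J}). There is no real obstacle beyond carefully invoking the repulsion-plus-(\ref{KK}) step to rule out high-degree edges; the rest is bookkeeping with the triangle inequality.
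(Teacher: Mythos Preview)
Your argument is correct and follows essentially the same route as the paper: use (\ref{KK}) with (\ref{I}) to see that every edge has an endpoint in ${\sf V}_*$, bound $[n(x)n(y)]^\theta$ by $n_*^\theta([n(x)]^\theta+[n(y)]^\theta)$, and then handle the two resulting sums via $\sum_{y\sim x}1=n(x)$ and the triangle inequality $w_\alpha(x)\le e^\alpha w_\alpha(y)$ for $x\sim y$. The only difference is that you spell out the interchange of summation and the splitting explicitly, whereas the paper compresses these steps into a single displayed estimate.
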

\begin{lemma}
 \label{Jlm}
Let ${\sf G}$ be as in Theorem \ref{Itm}. Then for every $\theta
>0$, there exists $a>0$, which may depend also on the parameters of
the function (\ref{march}), such that, for any $x\in {\sf V}$, there
exist $N_x\in \mathbb{N}$, for which
\begin{equation}
 \label{J2}
G_\theta (N,x)\ \stackrel{\rm def}{=} \ \sum_{y\in {\sf S} (N,x)}
[n(y)]^{1+\theta} \leq \exp \left( a N\right),
\end{equation}
whenever $N\geq N_x$.
\end{lemma}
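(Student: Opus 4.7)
The plan has two parts: (i) a pointwise bound $n(y)\le D_N:=\phi^{-1}(2N)$ for every $y\in {\sf B}(N,x)$, and (ii) a bound $|{\sf S}(N,x)|\le e^{a'N}$ on the cardinality of the sphere. Since $\log D_N = O(N/(\log N)^{1+\varepsilon}) = o(N)$, the factor $D_N^{1+\theta}$ in $G_\theta(N,x)\le D_N^{1+\theta}|{\sf S}(N,x)|$ is exponentially negligible, so (i) and (ii) together will yield $G_\theta(N,x)\le e^{aN}$ for any $a>a'$ and all sufficiently large $N$. For (i), I would pick a reference vertex $x_0\in {\sf V}_*^c$ (which exists because $\sup_v n(v)=+\infty$) and set $N_x:=\rho(x,x_0)$. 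For $y\in {\sf B}(N,x)\cap {\sf V}_*^c$ with $N\ge N_x$, the triangle inequality gives $\rho(y,x_0)\le 2N$, and (\ref{I}) applied to the pair $(y,x_0)$ then yields $n(y)\le \phi^{-1}(2N)$. Since $n(y)\le n_*\le D_N$ for $y\in {\sf V}_*$, the same bound holds throughout ${\sf B}(N,x)$.

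For (ii), each $y\in {\sf S}(N,x)$ is the endpoint of a geodesic from $x$, which is a simple path of length $N$; distinct $y$'s pick out distinct simple paths, so $|{\sf S}(N,x)|\le P(N,x)$, where $P(N,x)$ denotes the number of simple paths of length $N$ starting at $x$. I would classify vertices by \emph{level}: level $0$ for $v\in {\sf V}_*$ and level $\ell\ge K_*$ (with $2^{K_*-1}\ge n_*$) for $n(v)\in(2^{\ell-1},2^\ell]$. Along any simple path, two distinct vertices of level $\ge k$ lie in ${\sf V}_*^c$, so (\ref{I}) forces their graph distance, and hence their path distance, to be at least $\phi(2^{k-1})$. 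Consequently, the number of positions with level $\ge k$ is at most $B_k:=N/\phi(2^{k-1})+1$. For a profile with multiplicities $(m_0,m_{K_*},\ldots)$, the number of simple paths realizing it is at most $n_*^{m_0}\prod_{\ell\ge K_*}(2^\ell)^{m_\ell}$, since step $i$ admits at most $n(v_{i-1})$ choices.

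Summing over valid profiles yields a constrained multinomial sum that I would control by exponential tilting. Introduce $\lambda_k\ge 0$ with partial sums $\Lambda_\ell:=\sum_{K_*\le k\le\ell}\lambda_k$, use $\mathbb{1}[M_k\le B_k]\le e^{\lambda_k(B_k-M_k)}$ where $M_k:=\sum_{\ell\ge k}m_\ell$, and apply the multinomial theorem to obtain
\[
P(N,x)\;\le\; \exp\!\Bigl(\sum_{k\ge K_*}\lambda_k B_k\Bigr)\Bigl[n_* + \sum_{\ell\ge K_*}2^\ell e^{-\Lambda_\ell}\Bigr]^N.
\]
The choice $\Lambda_\ell:=\ell\log 2 + 2\log\ell$ makes $2^\ell e^{-\Lambda_\ell}=\ell^{-2}$, so the bracket is bounded by $n_*+\pi^2/6$. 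The increments $\lambda_\ell=\Lambda_\ell-\Lambda_{\ell-1}$ are $O(1)$ in $\ell$, and hence $\sum_k\lambda_k B_k$ is dominated by $N\sum_{\ell\ge K_*}1/\phi(2^{\ell-1})$. Here the specific form of $\phi$ is essential: for $\phi(b)=\upsilon\log b(\log\log b)^{1+\varepsilon}$ one has $\phi(2^{\ell-1})\sim \upsilon(\log 2)\,\ell(\log\ell)^{1+\varepsilon}$, and the series $\sum 1/(\ell(\log\ell)^{1+\varepsilon})$ converges \emph{precisely} when $\varepsilon>0$; hence $\sum_k\lambda_k B_k\le CN$ and $P(N,x)\le e^{a'N}$.

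Combining (i) and (ii) gives $G_\theta(N,x)\le D_N^{1+\theta}P(N,x)\le e^{aN}$ for any $a>a'$ and all $N\ge N_x$ sufficiently large. The main obstacle I expect is the tilting argument: $\Lambda_\ell$ must be engineered so that the bracket $\sum_\ell 2^\ell e^{-\Lambda_\ell}$ stays bounded \emph{and} the penalty $\sum_k\lambda_k B_k$ grows only linearly in $N$. The latter is exactly what the convergence of $\sum 1/\phi(2^{\ell-1})$ --- i.e.\ the doubly-logarithmic factor $(\log\log b)^{1+\varepsilon}$ in (\ref{march}) --- provides, which is ultimately why this precise shape of $\phi$ is the one that makes the lemma go through for any $\varepsilon>0$.
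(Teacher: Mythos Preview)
Your overall architecture is sound and matches the paper's: bound $n(y)$ on ${\sf B}(N,x)$ by $\phi^{-1}(2N)$, bound $|{\sf S}(N,x)|$ by the number of simple paths, and control the latter by exploiting that vertices of level $\ge k$ occur at most $N/\phi(2^{k-1})+1$ times along any simple path. The difference lies in how you turn this sparsity into an exponential bound on $P(N,x)$.

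The paper's route is shorter. Its Proposition~\ref{J1pn} gives directly $|\mathit{\Sigma}_N(x)|\le\max_\vartheta\prod_{i=0}^{N-1}n(v_i)$, and then one bounds $\sum_i\log n(v_i)$ for the maximizing path by grouping degrees into \emph{doubly exponential} buckets $b_k=\exp(e^k)$. With that spacing, $\sum_k m_k\log b_{k+1}\le 2N\sum_k\log b_{k+1}/\phi(b_k)=(2eN/\upsilon)\sum_k k^{-1-\varepsilon}$, which converges. Your dyadic buckets would make the analogous sum $\sum_k (k+1)/\phi(2^k)\asymp\sum_k(\log k)^{-1-\varepsilon}$ diverge, which is precisely why you need the tilting layer; conversely, the paper avoids any tilting by choosing the scales more aggressively. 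Your approach is more robust to the choice of buckets; the paper's is more elementary once the right scale is spotted.

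Two points in your argument need tightening. First, in part~(i) you cannot apply \eqref{I} to the pair $(x_0,x_0)$, so the case $y=x_0$ is not covered; you must also take $N_x\ge\phi(n(x_0))/2$ (or argue separately that $n(x_0)\le\phi^{-1}(2N)$ for large $N$). The paper handles this via a short case analysis (Lemma~\ref{MAlm}). Second, and more substantively, your claim ``the number of simple paths realizing a profile $(m_0,m_{K_*},\dots)$ is at most $n_*^{m_0}\prod_\ell(2^\ell)^{m_\ell}$'' is not what ``step $i$ admits at most $n(v_{i-1})$ choices'' proves: that branching argument bounds the number of paths with a fixed \emph{ordered} level sequence $(\ell_0,\dots,\ell_{N-1})$, not a multiplicity profile. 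Several level sequences share the same profile, so the count per profile carries an extra multinomial factor. The clean fix is to run the whole tilting argument over ordered level sequences rather than profiles; then $\sum_{\mathbf l}\prod_i c_{l_i}e^{-\Lambda_{l_i}}=\bigl(\sum_\ell c_\ell e^{-\Lambda_\ell}\bigr)^N$ is immediate and your displayed bound follows. (Alternatively, the per-profile bound \emph{is} true, but it needs the random-walk inequality $\sum_{\vartheta\in\mathit{\Sigma}_N(x)}\prod_{i=0}^{N-1}n(v_i)^{-1}\le 1$, not merely the branching count.) With either fix, the rest of your estimate---$\Lambda_\ell=\ell\log 2+2\log\ell$, $\lambda_k=O(1)$, and $\sum_k\lambda_k B_k=O(N)$ via convergence of $\sum_k 1/(k(\log k)^{1+\varepsilon})$---goes through as written.
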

\begin{remark}
  \label{NUrk}
A condition like (\ref{NU3}) could guarantee that the estimate
(\ref{J2}) holds only for $N=N_k$, $k\in \mathbb{N}$,  for some
increasing sequence $\{N_k\}_{k\in \mathbb{N}}\subset \mathbb{N}$.
This would not be  enough for proving Theorem \ref{Itm}.
\end{remark}

{\sc Proof of Theorem \ref{Itm}.}
 By (\ref{J0}) and (\ref{J2})
we have
\begin{eqnarray*} T_x (\alpha , \theta) & \leq &
\sum_{N=0}^{N_x}
 \exp (-\alpha N) \left(\sum_{y\in {\sf S} (N,x)} [n(y)]^{1+\theta}\right)\\[.2cm]
 & + &  \sum_{N=N_x+1}^{\infty} \exp [-(\alpha -a) N].
\end{eqnarray*}
Thus, the proof of the theorem follows by (\ref{J}) with
$\underline{\alpha} = a$. $\square$

\subsection{A property of the balls in repulsive graphs}
The proof of Lemma \ref{Jlm} is based on a property of the balls
${\sf B}(N,x)$ in the graphs ${\sf G}\in \mathbb{G}(n_*,\phi)$, due
to which one can control the growth of the maximum degree of $y\in
{\sf B}(N,x)$. Here we do not suppose that $\phi$ has the concrete
form of (\ref{march}).
\begin{lemma} \label{MAlm}
Let ${\sf G} = ({\sf V}, {\sf E})$ be in $\mathbb{G}(n_*,\phi)$ with
an arbitrary increasing function $\phi:(n_*, +\infty) \rightarrow
(1,+\infty)$. Then, for every $x\in {\sf V}$, there exists $N_x\in
\mathbb{N}$, such that
\begin{equation}
  \label{AP}
 \max_{y\in {\sf B}(N,x)} n(y) \leq \phi^{-1}(2N),
\end{equation}
whenever $N\geq N_x$.
\end{lemma}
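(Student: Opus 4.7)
The plan is to isolate the ``exceptional'' vertices of unusually high degree lying too close to $x$, to show there is at most one such vertex, and then to absorb it into the choice of $N_x$.

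First I would reduce to ${\sf V}_*^c$: any $y \in {\sf V}_* \cap {\sf B}(N,x)$ has $n(y) \leq n_* < \phi^{-1}(2N)$ (well-defined once $2N$ lies in the range of $\phi$, which holds for all $N \geq 1$ since the codomain of $\phi$ is $(1,+\infty)$), so such $y$ pose no problem. A vertex $y \in {\sf V}_*^c \cap {\sf B}(N,x)$ can violate the desired bound $n(y) \leq \phi^{-1}(2N)$ only if $\phi(n(y)) > 2N \geq 2\rho(x,y)$, that is, only if $y$ belongs to
\[
E_x \;=\; \{\, y \in {\sf V}_*^c \,:\, 2\rho(x,y) < \phi(n(y))\,\}.
\]

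The heart of the proof is the claim $|E_x| \leq 1$. Suppose for contradiction that $y_1, y_2 \in E_x$ are distinct with $n(y_1) \leq n(y_2)$. The triangle inequality together with the monotonicity of $\phi$ then gives
\[
\rho(y_1,y_2) \;\leq\; \rho(y_1,x) + \rho(x,y_2) \;<\; \tfrac{1}{2}\phi(n(y_1)) + \tfrac{1}{2}\phi(n(y_2)) \;\leq\; \phi(n(y_2)),
\]
whereas the repulsion condition (\ref{I}), applicable since $y_1, y_2 \in {\sf V}_*^c$, forces $\rho(y_1,y_2) \geq \phi(\max\{n(y_1), n(y_2)\}) = \phi(n(y_2))$, a contradiction.

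To finish I would set $N_x = 1$ if $E_x = \emptyset$, and $N_x = \lceil \phi(n(y_0))/2 \rceil$ if $E_x = \{y_0\}$. Then for $N \geq N_x$ and any $y \in {\sf V}_*^c \cap {\sf B}(N,x)$ one has either $y \notin E_x$, in which case $\phi(n(y)) \leq 2\rho(x,y) \leq 2N$, or $y = y_0$, in which case $\phi(n(y_0)) \leq 2N_x \leq 2N$ by the choice of $N_x$; either way, $n(y) \leq \phi^{-1}(2N)$, proving (\ref{AP}). The only non-routine step is the uniqueness of an exceptional vertex, and it is precisely here that the strengthened repulsion (\ref{I}) is indispensable: the bound $\tfrac12 \phi(n(y_1)) + \tfrac12 \phi(n(y_2)) \leq \phi(n(y_2))$ would \emph{not} be available under the $\min$-version (\ref{NU3}), which is exactly the point flagged in Remark \ref{NUrk}.
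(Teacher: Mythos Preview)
Your proof is correct and rests on the same key observation as the paper's: the repulsion condition (\ref{I}) together with the triangle inequality forces at most one vertex in ${\sf V}_*^c$ to satisfy $2\rho(x,y)<\phi(n(y))$. The paper arrives at this indirectly, first locating the closest vertex $\tilde{x}\in{\sf V}_*^c$ to $x$, splitting into the cases $\rho(x,\tilde{x})\gtrless \phi[n(\tilde{x})]/2$, and in the second case introducing a further auxiliary vertex $x_1$ (the closest vertex of degree exceeding $n(\tilde{x})$) to define $N_x=\rho(x,x_1)$. Your route is more direct: defining the exceptional set $E_x$ explicitly and proving $|E_x|\leq 1$ in one stroke isolates the essential mechanism and avoids the auxiliary $x_1$ altogether, at the cost only of a possibly different (but equally valid) numerical value of $N_x$. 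Your closing remark on why the $\max$-version (\ref{I}) rather than the $\min$-version (\ref{NU3}) is needed is exactly on point and matches the paper's Remark \ref{NUrk}.
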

\begin{proof}
Given $x$, let $\tilde{x}$ be the vertex in ${\sf V}_*^c$ which is
closest to $x$, see (\ref{NU}). If there are more than one such
vertices at the same distance, we take the one with the highest
degree. For this $\tilde{x}$, we have the following possibilities:
(i) $\rho(x, \tilde{x}) \geq \phi[n(\tilde{x})]/2$; (ii) $\rho(x,
\tilde{x}) < \phi[n(\tilde{x})]/2$. The latter one includes also the
case $\tilde{x} = x$, i.e., where $x$ itself is in ${\sf V}_*^c$. In
case (i), we set $N_x= 1$, which means that (\ref{AP}) holds for all
$N\in \mathbb{N}$. Indeed, if $N< \rho (x,\tilde{x})$, then the ball
${\sf B}(N,x)$ contains only vertices $y\in{\sf V}_*$, for which
$n(y) \leq n_* \leq \phi^{-1} (2N)$ for any $N\in \mathbb{N}$. If
$N\geq \rho (x,\tilde{x})$ and $\max_{y\in {\sf B}(N,x)} n(y) =
n(\tilde{x})$, one has $N \geq \rho(x, \tilde{x}) \geq
\phi[n(\tilde{x})]/2$, which yields (\ref{AP}) also for this case.
Finally, let $\max_{y\in {\sf B}(N,x)} n(y) = n(z)$ for some $z\neq
\tilde{x}$, which means that $n(z) > n(\tilde{x})$. In this case, by
(\ref{I}) we have $\rho (\tilde{x}, z) \geq \phi[(n(z)]$, and
\begin{equation} \label{esti}
2N \geq \rho (x, z) + \rho (x, \tilde{x}) \geq \rho (\tilde{x}, z)
\geq \phi[(n(z)],
\end{equation}
which yields (\ref{AP}) for this case as well.

If  (ii) holds, we  let  $x_1$ be the closest vertex to $x$, such
that $n(x_1)
> n(\tilde{x})$. Again, we take that of the highest degree if
there are more than one such vertices. By (\ref{I}) we have
$\rho(\tilde{x}, x_1) \geq \phi[n(x_1)]$.  If for $N\geq N_x \
\stackrel{\rm def}{=} \  \rho (x, x_1) $, one has $\max_{y\in {\sf
B}(N,x)} n(y) = n(x_1)$, then
\begin{eqnarray*} 
N &\geq & \rho (x, x_1) \geq \phi [n(x_1)] - \rho (x, \tilde{x})
\geq \phi [n(x_1)] - \phi [n(\tilde{x})]/2 \\[.2cm] & \geq & \phi
[n(x_1)]/2, \nonumber
\end{eqnarray*}
which yields (\ref{AP}). Finally, let $\max_{y\in {\sf B}(N,x)} n(y)
= n(z)$ for some $z\neq {x}_1$, which means that $n(z) > n({x}_1)$.
In this case, $\rho ({x}_1, z) \geq \phi[(n(z)]$, and we obtain
(\ref{AP}) by applying (\ref{esti}) with $\tilde{x}$ replaced by
$x_1$.
 \end{proof}

\subsection{Proof of Lemmas \ref{Klm} and \ref{Jlm}}
First we prove an auxiliary statement. Recall that by $\vartheta
(x,y)$ we denote a path with endpoints $x$ and $y$.   A path is
called {\it simple} if none of its inner vertices are repeated. For
$m\leq n$, let $\vartheta' = \{x_0, \dots , x_m\}$ and $\vartheta=
\{y_0, \dots , y_n\}$ be such that $x_0 = y_k$, $x_1 = y_{k+1},
\dots, x_m = y_{k+m}$ for some $k=0, \dots , n-m$. Then we say that
$\vartheta'$ is a {\it subpath} of $\vartheta$, and write
$\vartheta' \subset \vartheta$. For a path $\vartheta$, by ${\sf
V}_\vartheta$ we denote the set of all its vertices.

Let $\mathit{\Sigma}_N(x)$ denote the family of all simple paths of
length $N$ originated at $x$. Then, for every $y\in {\sf S}(N,x)$,
there exists $\vartheta \in \mathit{\Sigma}_N(x)$, such that
$\vartheta = \vartheta (x,y)$. We use this fact for estimating the
cardinality of ${\sf S}(N,x)$.
\begin{proposition}[cf. Assertion 6 of \cite{[BD]}]
  \label{J1pn}
In any graph ${\sf G}$, for any $x\in {\sf V}$ and $N\in
\mathbb{N}$, one has
\begin{equation}
  \label{J4}
|{\sf S}(N, x)| \leq |\mathit{\Sigma}_N(x)| \leq \max_{\vartheta \in
\mathit{\Sigma}_N(x)}  \prod_{y\in {\sf V}_\vartheta \setminus
\{x_N\}} n(y).
\end{equation}
\end{proposition}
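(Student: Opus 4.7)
\smallskip

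\noindent\textbf{Proof plan for Proposition \ref{J1pn}.}
The inequality $|{\sf S}(N,x)| \leq |\mathit{\Sigma}_N(x)|$ is easy: for each $y\in {\sf S}(N,x)$ the path distance $\rho(x,y)=N$ is realized by at least one path $\vartheta(x,y)$ of length $N$, and any such path of minimal length is automatically simple (a repeated vertex would allow us to shorten it, contradicting $\rho(x,y)=N$). Choosing one such path for each $y$ produces an injection ${\sf S}(N,x)\hookrightarrow \mathit{\Sigma}_N(x)$, because distinct $y$'s are distinct endpoints of distinct paths. The main work therefore lies in the second inequality.

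For the second inequality, the plan is to organize $\mathit{\Sigma}_N(x)$ into a rooted tree $\mathcal{T}$ of partial simple paths and then count its depth-$N$ leaves. The root at depth $0$ is the trivial path $\{x\}$. A node at depth $k$ represents a simple path $\vartheta_k=\{x_0,x_1,\dots,x_k\}$ with $x_0=x$; its children in $\mathcal{T}$ are the extensions $\{x_0,\dots,x_k,z\}$ obtained by appending a neighbor $z$ of $x_k$ in ${\sf G}$ that does not already occur in $\vartheta_k$. Thus the tree-branching factor at the node $\vartheta_k$ is bounded by $n(x_k)$. By construction, the depth-$N$ leaves of $\mathcal{T}$ are in bijection with $\mathit{\Sigma}_N(x)$, and any root-to-leaf branch in $\mathcal{T}$ corresponds, by reading off the added vertices, to a simple path $\vartheta=\{x_0,\dots,x_N\}\in\mathit{\Sigma}_N(x)$, whose non-leaf nodes along the branch contribute precisely the factors $n(x_0),n(x_1),\dots,n(x_{N-1})$.

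The combinatorial fact I then intend to prove (by induction on $N$) is the following purely tree-theoretic lemma: in any rooted tree in which every internal node $v$ has at most $d(v)$ children, the number of leaves at depth exactly $N$ is bounded by
\[
\max_{\pi}\ \prod_{v\in\pi,\ v\ \text{not a leaf}} d(v),
\]
where the maximum is over root-to-leaf branches $\pi$ reaching depth $N$. The induction step is: letting the root have children $v_1,\dots,v_k$ with $k\le d(\text{root})$, summing the inductive bounds on the subtrees at the $v_i$'s gives at most $k\cdot M_{i^\ast}\le d(\text{root})\cdot M_{i^\ast}$, where $M_{i^\ast}$ is the largest per-subtree bound; this quantity is exactly the product along the extended root-to-leaf branch through $v_{i^\ast}$, which is one of the competitors in the max on the right-hand side. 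Specializing this lemma to $\mathcal{T}$ with $d(\vartheta_k)\le n(x_k)$ yields
\[
|\mathit{\Sigma}_N(x)| \leq \max_{\vartheta\in\mathit{\Sigma}_N(x)}\ \prod_{y\in{\sf V}_\vartheta\setminus\{x_N\}} n(y),
\]
as required.

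The only mildly subtle point -- and the step I would be most careful about -- is the bookkeeping in the induction: one must check that a ``non-leaf in a subtree of depth $N-1$'' remains a ``non-leaf of the ambient depth-$N$ tree'' (so that the degree factor $d(v)$ legitimately appears in the extended product), which holds because such a $v$ has at least one descendant at depth $N-1$ in its subtree, hence at least one descendant at depth $N$ in the big tree. Everything else is routine.
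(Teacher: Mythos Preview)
Your proposal is correct and follows essentially the same inductive idea as the paper: both arguments bound $|\mathit{\Sigma}_N(x)|$ by peeling off the first step, obtaining a factor $n(x)$ times the maximum over $y\sim x$ of the corresponding count for paths of length $N-1$. The paper writes this directly as $|\mathit{\Sigma}_N(x)| \le n(x)\max_{y\sim x}|\mathit{\Sigma}^{x}_{N-1}(y)|$ (with the edge $\langle x,y\rangle$ deleted to keep the continuation simple), whereas you package the same recursion as a tree-counting lemma on the prefix tree of simple paths; the inductive content is identical.
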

\begin{proof}
The proof will be done by induction in $N$. For $N=1$, the estimate
(\ref{J4}) is obvious.  For any $N\geq 2$, we have
\begin{equation}
  \label{Ma3}
|\mathit{\Sigma}_N (x)| \leq n(x) \max_{y\sim x}
|\mathit{\Sigma}^x_{N-1} (y)|,
\end{equation}
where $\mathit{\Sigma}^x_{N-1} (y)$ is the corresponding family of
paths in the graph which one obtains from ${\sf G}$ be deleting the
edge $\langle x, y\rangle$.
 Every  $\vartheta \in \mathit{\Sigma}_N (x)$ can be written
in the form $\vartheta = \{ x\tilde{\vartheta}\}$ with
$\tilde{\vartheta} \in \mathit{\Sigma}^x_{N-1} (y)$ for some $y\sim
x$. Then by the inductive assumption we have
\begin{eqnarray*}
|\mathit{\Sigma}_N (x)|  & \leq & n(x) \max_{y \sim x}
\max_{\tilde{\vartheta} \in \mathit{\Sigma}^x_{N-1} (y)}
 \prod_{z\in
{\sf V}_{\tilde{\vartheta}} \setminus \{x_N\}} n(z) \\[.2cm] & \leq &
\max_{\vartheta \in \mathit{\Sigma}_{N} (x)}  \prod_{z\in {\sf
V}_{{\vartheta}} \setminus \{x_N\}} n(z),
\end{eqnarray*}
that completes the proof.
\end{proof}

{\sc Proof on Lemma \ref{Jlm}.} We are going to prove that the
estimate (\ref{J2}) holds with $N_x$ being as in Lemma \ref{MAlm}
and $a$ given by
\begin{eqnarray} \label{Mak6}
a & = & (1+\theta) \sigma + \log n_* + \frac{2e}{\upsilon}
\sum_{k=k_*}^{\infty} \frac{1}{k^{1+\varepsilon}},\\[.2cm]
\sigma & \stackrel{\rm def}{=} &
 \max\left\{2 / \upsilon ; e\right\}. \nonumber
\end{eqnarray}
For any $N\geq N_x$, by (\ref{AP}) and (\ref{J4}) we obtain
\begin{equation}
  \label{Mak2}
G_\theta (N, x) \leq \exp\left\{ (1+\theta) \log \phi^{-1} (2N) +
\max_{\vartheta \in \mathit{\Sigma}_N (x)} \sum_{z\in {\sf
V}_\vartheta \setminus \{x_N \}} \log n(z)\right\}.
\end{equation}
By (\ref{march}) we have
\begin{equation}
  \label{Mak3}
\phi^{-1} (2N) \leq \exp(\sigma N),
\end{equation}
where $\sigma$ is as in (\ref{Mak6}). If $x$ is as in the case (i)
considered in the proof of Lemma \ref{MAlm}, and $N <
\phi(n_*+1)/2$, then ${\sf V}_\vartheta \subset {\sf V}_*$ for any
$\vartheta \in \mathit{\Sigma}_N (x)$. In this case, the second
summand in $\{ \cdot \}$ in (\ref{Mak2}) does not exceed $N\log
n_*$, which certainly yields (\ref{J2}).

To handle the case of $N\geq \phi(n_*+1)/2$ we use the sequence
 $\{c_k\}_{k\in \mathbb{N}}$, where $c_k = \exp ( e^k)$, $k\in \mathbb{N}$.
Let $k_*$ be the least $k\in \mathbb{N}$ such that $c_{k_* +1} \geq
n_* + 1$. Then we set $b_{k_*} = n_* + 1$ and $b_k = c_k$ for $k
> k_*$. Let  $k_N$ be the largest
$k$, such that $b_{k} \leq \phi^{-1} (2N)$. For $k = k_*, \dots,
k_N$ and a given $\vartheta \in \mathit{\Sigma}_N (x)$, let
$m_k^\vartheta$ be the number of vertices $y\in {\sf V}_\vartheta$,
such that $n(y) \in [b_k, b_{k+1}]$. Given $\tau\in (0, N)$, for any
$\vartheta \in \mathit{\Sigma}_N (x)$, the number of vertices in
${\sf V}_\vartheta$ which  are away from each other at distance at
least  $\tau$  is  $1 + N / \tau$, at most. Therefore,
\begin{equation*}
 m_k^\vartheta \leq   m_k \ \stackrel{\rm def}{=} \ 1 + N/ \phi(b_k)\leq 2N / \phi (b_k).
\end{equation*}
Taking this into account by (\ref{march}) we get
\begin{eqnarray*}
  \max_{\vartheta \in \mathit{\Sigma}_N (x)} \sum_{z\in {\sf
V}_\vartheta \setminus \{x_N \}} \log n(z) & \leq & N \log n_* +
\sum_{k=k_*}^{k_N} m_k \log b_{k+1} \\[.2cm]
& \leq & N \left(\log n_* + \frac{2e}{\upsilon}
\sum_{k=k_*}^{\infty} \frac{1}{k^{1+\varepsilon}}\right).
\end{eqnarray*}
Applying (\ref{Mak3}) and the latter estimate in (\ref{Mak2}) we
obtain (\ref{J2}) also in this case. $\square$
\begin{remark}
  \label{NU1rk}
Our choice of $\phi$ made in (\ref{march}) was predetermined by the
condition (\ref{Mak3}), which we used to estimate the first summand
in $\{\cdot\}$ in  (\ref{Mak2}), as well as by the following one
\begin{equation} \label{Mak10}
\sum_{k=k_*}^{\infty} \frac{\log b_{k+1}}{\phi(b_k)} <\infty ,
\end{equation}
which was employed for estimating the second summand in
(\ref{Mak2}), for a concrete choice of the sequence $\{b_k\}_{k\geq
k_*}$ made therein. In principle, any $\phi$ obeying such two
conditions (for some choice of $\{b_k\}_{k\geq k_*}$) can be used.
For $b_k = k$, $k\geq k_* = n_*+1$, one can take $\phi (b) =
b^{1+\varepsilon}$ for some $\varepsilon >0$, which obviously obeys
(\ref{Mak3}) and (\ref{Mak10}) but imposes a stronger repulsion, see
(\ref{I}). Our choice (\ref{march}) seems to be optimal.
\end{remark}

{\sc Proof on Lemma \ref{Klm}.} In view of  (\ref{KK}), we have that
$\rho(x,y) \geq 2$ for any $x,y\in{\sf V}_*^c$; hence, for two
adjacent vertices, at least one should be in ${\sf V}_*$. Taking
this into account  by (\ref{Pq}) and the triangle inequality we
derive
\begin{eqnarray*}
\mathit{\Theta} (\alpha , \theta) & = & \sum_{x} [n(x)]^{\theta}
\left(\sum_{y\sim x} [n(y)]^{\theta}\right) \exp[-\alpha \rho
(o,x)]\\[.2cm] & \leq & n_*^\theta e^\alpha \sum_{y} [n(y)]^{1+\theta} \exp[-\alpha \rho
(o,y)]\nonumber\\[.2cm] & + & n_*^\theta \sum_{x}[n(x)]^{1+\theta} \exp[-\alpha \rho
(o,x)],\nonumber
\end{eqnarray*}
which  yields (\ref{J}), see (\ref{J0}).
 $\square$

\section*{Acknowledgements}

The authors are grateful to Philippe Blanchard and Michael R\"ockner
for valuable discussions and encouragement.

\end{document}